\documentclass{amsart}

\usepackage{amssymb}
\usepackage{amsmath}
\usepackage{amsthm}

\newtheorem{theorem}{Theorem}

\newtheorem{definition}[theorem]{Definition}
\newtheorem{lemma}[theorem]{Lemma}



\usepackage[T1]{fontenc}

\usepackage{pgf,tikz}
\usetikzlibrary{arrows}
\usepackage{mathptmx}
\usepackage{helvet}
\usepackage{courier}
\usepackage{type1cm}
\usepackage{graphicx}
\usepackage{multicol}
\usepackage[bottom]{footmisc}
\usepackage{amsfonts}
\setcounter{MaxMatrixCols}{30}

\definecolor{uuuuuu}{rgb}{0.26666666666666666,0.26666666666666666,0.26666666666666666}
\definecolor{ffffff}{rgb}{1.0,1.0,1.0}
\definecolor{zzttqq}{rgb}{0.26666666666666666,0.26666666666666666,0.26666666666666666}
\definecolor{cqcqcq}{rgb}{0.7529411764705882,0.7529411764705882,0.7529411764705882}

\providecommand{\U}[1]{\protect\rule{.1in}{.1in}}



\begin{document}

\title[Low-discrepancy sequences]{Low-discrepancy sequences for piecewise smooth functions on 
the two-dimensional torus}

\author[L. Brandolini]{Luca Brandolini}

\address{Dipartimento di Ingegneria Gestionale, dell'Informazione e della Produzione,
                Universit\`{a} di Bergamo,
                Viale Marconi 5,
                 24044 Dalmine (BG), Italy.}
\email{luca.brandolini@unibg.it}  
             
            \author[L. Colzani]{Leonardo Colzani}
            \address{Dipartimento di Matematica e Applicazioni,
               Edificio U5,
               Universit\`{a} di Milano Bicocca,
               Via R.Cozzi 53, 20125 Milano, Italy.}
\email               {leonardo.colzani@unimib.it}
          
                \author[G. Gigante]{Giacomo Gigante} 
\address{                Dipartimento di Ingegneria Gestionale, dell'Informazione e della Produzione,
                Universit\`{a} di Bergamo,
                Viale Marconi 5,
                 24044 Dalmine (BG), Italy.}              
                 \email{giacomo.gigante@unibg.it}
                
                \author[G. Travaglini]{Giancarlo Travaglini}
\address {Dipartimento di Statistica e Metodi Quantitativi,
                 Edificio U7,
                 Universit\`{a} di Milano-Bicocca,
                 Via Bicocca degli Arcimboldi 8,
                 20126 Milano, Italy.}
\email      {giancarlo.travaglini@unimib.it}

\begin{abstract}
We produce explicit low-discrepancy infinite sequences which can be used
to approximate the integral of a smooth periodic function restricted to a convex
domain with positive curvature in $\mathbb{R}^{2}$. The proof depends on simultaneous diophantine
approximation and a general version of the Erd\H os-Tur\'an inequality.

{\bf Keywords}: Koksma-Hlawka inequality, piecewise smooth functions,
discrepancy, diophantine approximation, Erd\H os-Tur\'an inequality.
\end{abstract}



\subjclass[2010] {41A55; 11K38}

\maketitle








\section{Introduction}

Let $f$ be a suitable function on the $d$-dimensional torus $\mathbb{T}%
^{d}=\mathbb{R}^{d}/\mathbb{Z}^{d}$, and let $\{t\left(  j\right)
\}_{j=1}^{N}$ be a distribution of points on $\mathbb{T}^{d}$. The quality of
the approximation of $\ \int_{\mathbb{T}^{d}}f(t)dt$ \ by the Riemann sum
$\ N^{-1}\sum_{j=1}^{N}f\left(  t\left(  j\right)  \right)  $ \ is a basic
problem with applications in 2D or 3D computer graphics, and also with
applications when $d$ is large (and the curse of dimensionality appears). See
e.g. \cite{CSA}. Any bound of the form%
\[
\left\vert \frac{1}{N}\sum_{j=1}^{N}f\left(  t\left(  j\right)  \right)
-\int_{\mathbb{T}^{d}}f(t)dt\right\vert \leq D\left(  \{t\left(  j\right)
\}_{j=1}^{N}\right)  V\left(  f\right)
\]
can be termed a {\it Koksma-Hlawka type
inequality}, provided the RHS is a {\it variation} $V(f)$ of the
function $f$ times a {\it discrepancy} $D\left(  \{t\left(  j\right)  \}_{j=1}%
^{N}\right)  $ of the finite set $\{t\left(  j\right)  \}_{j=1}^{N}$ with
respect to a reasonably simple family of subsets of $\mathbb{T}^{d}$.

The case $d=1$ is the amazingly simple Koksma inequality, where $\mathbb{T}$
is replaced by the unit interval, $V\left(  f\right)  $ is the usual total variation
and $D\left(  \{t\left(  j\right)  \}_{j=1}^{N}\right)  $ is the
*-discrepancy
\[
\sup_{0<\alpha\leq1}\left\vert \frac{1}{N}\sum_{j=1}^{N}\chi_{\left[
0,\alpha\right)  }\left(  t\left(  j\right)  \right)  -\alpha\right\vert \;,
\]
that is the discrepancy measured on the family of all intervals anchored at
the origin.

See \cite{BC}, \cite{CSA}, \cite{DP}, \cite{DT}, \cite{KN}, \cite{Matousek},
\cite{Travaglini} as general references.

The term Koksma-Hlawka inequality properly refers to E. Hlawka's
generalization of Koksma inequality to several variables, where $f$ is
required to have bounded variation in the sense of Hardy and Krause. In
one variable, many familiar bounded functions have bounded variation, but,
in several variables, the Hardy-Krause condition cannot be applied to most
functions with simple discontinuities. For example: the characteristic
function of a polyhedron has bounded Hardy-Krause variation if and only if the
polyhedron is a $d$-dimensional interval.

We recall some of the variants of the Koksma-Hlawka inequality which have
appeared in the literature so far. F. Hickernell \cite{Hickernell} has
proposed Koksma-Hlawka type inequalities for reproducing kernel Hilbert
spaces. J. Dick \cite{Dick} has used fractional calculus to prove a
Koksma-Hlawka type inequality for functions with relaxed smoothness
assumptions. G. Harman \cite{Harman} has considered a geometric approach and
measured the variation by counting the convex sets needed to describe
super-level sets of the function $f$. In \cite{BCGTsimplices} the authors of
the present paper proposed a Koksma-Hlawka type inequality especially
tailored for simplices, while in \cite{BCGTpiecewise} they have introduced a
Koksma-Hlawka type inequality for piecewise smooth functions.
Analogues of the above problem in more general settings can be found e.g. in
\cite{BCCGT} and \cite{BCCGST}.

K. Basu and A. Owen \cite{BO} have recently produced low-discrepancy sequences
for a triangle, where the discrepancy is the one considered in
\cite{BCGTsimplices}. In this paper we propose a sequence of points which
gives low discrepancy in the sense of the Koksma-Hlawka type inequality in
\cite{BCGTpiecewise}. We first recall a particular two-dimensional case of the statement therein.

\begin{theorem}
[\cite{BCGTpiecewise}] 
\label{T1}Let $h\left(  t\right)  =f\left(  t\right)  \chi
_{\Omega}\left(  t\right)  $, where $f$ is a smooth $\mathbb{Z}^{2}$-periodic
function on $\mathbb{R}^{2}$ and $\chi_{\Omega}$ is the characteristic
function of a bounded Borel set in $\mathbb{R}^{2}$. Let
\[
V(f):=4\left\Vert f\right\Vert _{L^{1}\left(  \mathbb{T}^{2}\right)
}+2\left\Vert \frac{\partial f}{\partial t_{1}}\right\Vert _{L^{1}\left(
\mathbb{T}^{2}\right)  }+2\left\Vert \frac{\partial f}{\partial t_{2}%
}\right\Vert _{L^{1}\left(  \mathbb{T}^{2}\right)  }+\left\Vert \frac
{\partial^{2}f}{\partial t_{1}\partial t_{2}}\right\Vert _{L^{1}\left(
\mathbb{T}^{2}\right)  }\;.
\]
Let $\left\{  t(j)\right\}  _{j=1}^{N}\subset\mathbb R ^{2}$, 
for any $s\in\left(  0,1\right)  ^{2}$ and for any $x\in\mathbb{R}^{2}$ let
\[
I\left(  s,x\right)  =\cup_{m\in\mathbb{Z}^{2}}\left(  \left[  0,s_{1}%
\right]  \times\left[  0,s_{2}\right]  +x+m\right),
\]
 and let%
\begin{equation}
D\left(  \left\{  t(j)\right\}  _{j=1}^{N}\right) 
 :=\sup_{s\in\left(  0,1\right)  ^{2},\;x\in\mathbb{R}^{2}}\left\vert
\frac{1}{N}\sum_{j=1}^{N}\sum_{m\in\mathbb{Z}^{2}}\chi_{I\left(  s,x\right)
\cap\Omega}\left(  t\left(  j\right)  +m\right)  -\left\vert   I\left(
s,x\right)    \cap\Omega\right\vert \right\vert \;. \label{dp kh}
\end{equation}
Then%
\[
\left\vert \frac{1}{N}\sum_{j=1}^{N}\sum_{m\in\mathbb{Z}^{2}}h\left(  t\left(
j\right)  +m\right)  -\int_{\mathbb{R}^{2}}h\left(  t\right)  ~dt\right\vert
\leq V(f)\ D\left(  \left\{  t(j)\right\}  _{j=1}^{N}\right)  \;.
\]

\end{theorem}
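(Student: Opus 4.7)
The plan is to reduce the error to the rectangle-type discrepancy $D(\{t(j)\})$ by decomposing the smooth periodic function $f$ into indicators of anchored rectangles, and then applying the signed linear functional $L(\phi) := \frac{1}{N}\sum_{j}\sum_{m} \phi(t(j)+m) - \int_{\mathbb{R}^{2}} \phi\,dt$ termwise.

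The first step is to invoke the Bernoulli-kernel identity
\[
f(x) = \int_{\mathbb{T}^{2}} f\,dy + \int_{\mathbb{T}^{2}} \partial_{1}f(y)\,K(x_{1}-y_{1})\,dy + \int_{\mathbb{T}^{2}} \partial_{2}f(y)\,K(x_{2}-y_{2})\,dy + \int_{\mathbb{T}^{2}} \partial_{1}\partial_{2}f(y)\,K(x_{1}-y_{1})K(x_{2}-y_{2})\,dy,
\]
valid for any smooth $\mathbb{Z}^{2}$-periodic $f$, with $K(u) = 1/2 - \{u\}$ the $1$-periodic first Bernoulli polynomial; it is verified either termwise in Fourier series or, in real variables, by two successive integrations by parts over $\mathbb{T}$. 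The key observation is that $K$ is a superposition of $\mathbb{Z}$-periodic indicators of anchored intervals: $K(u) + 1/2 = \int_{0}^{1}\chi_{[0,s)}(u)\,ds$. Multiplying the identity by $\chi_{\Omega}(x)$, each product of $K$-factors with $\chi_{\Omega}$ thus rewrites as an integral over $s$ of $\chi_{I(s,y)\cap\Omega}(x)$ for anchored rectangles $I(s,y)$ of the type appearing in the definition of $D$, with one or both sides possibly equal to $1$ in the limiting cases (strips or all of $\mathbb{T}^{2}$).

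The second step is to apply $L$ to $h$ using this decomposition and swap the order of integration by Fubini, pulling the outer $y$- and $s$-integrations outside $L$. The remaining inner quantity is $L(\chi_{I(s,y)\cap\Omega})$, which by the definition \eqref{dp kh} is bounded in absolute value by $D(\{t(j)\})$ whenever $s \in (0,1)^{2}$. The boundary cases $s_{i} = 1$ are handled by letting $s_{i} \to 1^{-}$ and invoking dominated convergence: since $\mu_{N}$ is a finite discrete measure and $\chi_{[0,s_{i})} \to 1$ pointwise a.e.\ (and in $L^{1}$) as $s_{i} \to 1^{-}$, the corresponding strip- and torus-discrepancies are also bounded by $D$.

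The third step is to integrate out the remaining parameters. The elementary integrals $\int_{0}^{1} ds = 1$, $\int_{0}^{1} s\,ds = 1/2$, combined with $\int_{\mathbb{T}^{2}} |\partial_{i} f|\,dy = \|\partial_{i} f\|_{L^{1}}$ etc., assemble into a bound of the form $D\cdot V(f)$. The main obstacle is the bookkeeping needed to obtain precisely the coefficients $(4,2,2,1)$ in $V(f)$: splitting each $K$ into its positive indicator piece and the constant $-1/2$ makes the tensor product $K(x_{1}-y_{1})K(x_{2}-y_{2})$ expand into four cross terms, and the periodicity identities $\int_{\mathbb{T}^{2}} \partial_{i} f = \int_{\mathbb{T}^{2}} \partial_{1}\partial_{2} f = 0$ kill some but not all of them. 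Tracking the sign of each surviving cross term, and weighting by the elementary $s$-integrals above, is the one step that requires genuine care; once carried out, the four contributions line up with the coefficients $4, 2, 2, 1$, and summing them yields the asserted inequality.
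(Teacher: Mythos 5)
The paper does not actually prove Theorem~\ref{T1}; it recalls the statement from \cite{BCGTpiecewise} and uses it as a black box, so there is no internal proof here to compare your argument against.

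Judged on its own terms, your approach works. The iterated periodic Bernoulli-kernel identity is standard, the rewriting $K(u)+\tfrac{1}{2}=\int_0^1\chi_{[0,s)}(u)\,ds$ is correct, the Fubini swap is justified because $\Omega$ is bounded (so the lattice sum inside $L$ is finite) and the derivatives of $f$ are integrable, and the limiting argument $s_i\to 1^-$ correctly extends the rectangle discrepancy bound to the strip and full-torus cases.

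The one place you have gone slightly astray is the final bookkeeping, and it errs on the safe side. Because $\int_{\mathbb T^2}\partial_i f=0$ and $\int_{\mathbb T^2}\partial_1\partial_2 f=0$, and because one can integrate out the free variable first in the mixed terms (e.g.\ performing the $y_2$-integration first makes $\int_{\mathbb T^2}\partial_1\partial_2 f(y)\,\widetilde{K}(x_1-y_1)\,dy$ vanish), in fact \emph{all} the $-\tfrac{1}{2}$ cross terms disappear, not just some of them. Writing $\widetilde{K}(u)=K(u)+\tfrac{1}{2}=\int_0^1\chi_{[0,s)}(u)\,ds$, the surviving identity is
\[
f(x)=\int_{\mathbb T^2}f+\int_{\mathbb T^2}\partial_1 f(y)\,\widetilde{K}(x_1-y_1)\,dy+\int_{\mathbb T^2}\partial_2 f(y)\,\widetilde{K}(x_2-y_2)\,dy+\int_{\mathbb T^2}\partial_1\partial_2 f(y)\,\widetilde{K}(x_1-y_1)\widetilde{K}(x_2-y_2)\,dy,
\]
and since each $\widetilde{K}$ factor is a probability average of periodic anchored-interval indicators, the termwise bound by $D$ gives
\[
\left|L(f\chi_\Omega)\right|\le\left(\|f\|_{L^1(\mathbb T^2)}+\|\partial_1 f\|_{L^1(\mathbb T^2)}+\|\partial_2 f\|_{L^1(\mathbb T^2)}+\|\partial_1\partial_2 f\|_{L^1(\mathbb T^2)}\right)D,
\]
that is, coefficients $(1,1,1,1)$ rather than $(4,2,2,1)$. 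Since this is componentwise smaller it certainly implies the stated inequality, so there is no gap; but your claim that careful bookkeeping reproduces exactly $(4,2,2,1)$ is an overstatement --- you in fact prove something slightly stronger, and the larger constants in the statement presumably reflect the more general (higher-dimensional, not necessarily periodic-kernel) framework of \cite{BCGTpiecewise}.
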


Observe that if a set $K\in\mathbb R^2$ does not intersect any of its integer translates, then it can be
thought of as a subset of $\mathbb T^2,$ and in that case the expression
\[
\frac1N\sum_{j=1}^{N}\sum_{m\in\mathbb{Z}^{2}}\chi_{K}
\left(  t\left(  j\right)  +m\right)-|K|
\]
 compares the measure of  $K$ with the share of 
points in $K$ of the collection obtained by projecting $\{t(j)\}_{j=1}^N$ onto $\mathbb T^2.$ 
It follows that the above theorem includes, but is slightly more general than the
analogous theorem where not only the function $f$ but also the set $\Omega$ and the point collection
$\{t(j)\}$ are in $\mathbb T^2$, and the quantity $D(\left\{t(j)\right\}_{j=1}^N)$ is just the discrepancy 
with respect to the intersection of $\Omega$ with all the rectangles in $\mathbb T^2$.

We are therefore interested in choices of the set $\left\{  t\left(  j\right)
\right\}  _{j=1}^{N}$ which give satisfactory upper bounds for the discrepancy
 (\ref{dp kh}).

An interesting result in this direction is due to J. Beck \cite{Beck}: for every positive integer $N$
there is a collection of $N$ points in the unit square with isotropic discrepancy (that is, the discrepancy with respect to all convex sets) bounded by
$cN^{-2/3}\log^4N.$ Since the discrepancy  (\ref{dp kh}) is smaller than the isotropic discrepancy, 
Beck's result gives a sequence that can be used in the Koksma-Hlawka type inequality in Theorem \ref{T1} when $\Omega$ is convex.
On the other hand, Beck's construction is somewhat intricate, and is obtained partly by random and partly by deterministic methods.

A more explicit extensible construction comes from a result of H. Niederreiter (see \cite{Nieder} or \cite[page 129 and page 132, Exercise 3.17]{KN}): if $1,\,\alpha,\,\beta$ are algebraic linearly independent on $\mathbb Q$, then the discrepancy of $\{(j\alpha,\,j\beta)\}_{j=1}^{N}$ with respect to all axis parallel rectangles 
contained in the unit square is bounded by $cN^{-1+\varepsilon}.$ This immediately implies that the isotropic
discrepancy of this sequence is bounded by
$cN^{-1/2+\varepsilon}$ (see \cite[Theorem 1.6, page 95]{KN}), an estimate that is far from Beck's result.


Our main result is the following.

\begin{theorem}
\label{HK} Assume that $\alpha,\beta$ are real algebraic numbers and that
$1,\alpha,\beta$ is a basis of a number field on $\mathbb Q$ of degree $3$. For all integers
$j\geq0$, let $\,\,t\left(  j\right)  =(j\alpha
,j\beta)$. Let $\Omega$ be a convex domain contained in
$\mathbb{R}^{2}$ with $\mathcal{C}%
^{2}$ boundary having strictly positive curvature. Then the
discrepancy defined in (\ref{dp kh}) satisfies
\begin{equation}
D\left(  \left\{  t(j)\right\}  _{j=1}^{N}\right)  \leq c\ N^{-2/3}\log N.
\label{dueterzi}%
\end{equation}
The above constant $c$ depends on the minimum and the maximum of the 
curvature of $\partial\Omega,$
on its length, and on the numbers $\alpha,\,\beta.$
\end{theorem}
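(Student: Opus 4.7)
The plan is to combine a general Erd\H os--Tur\'an type inequality, adapted to the family $\{I(s,x)\cap\Omega\}$, with exponential sum estimates for the Kronecker sequence $\{(j\alpha,j\beta)\}$ arising from the cubic number field hypothesis, and then to optimize the cutoff. First I reformulate the discrepancy on the torus. Using the $\mathbb{Z}^{2}$-periodicity of $\chi_{I(s,x)}$, one has
\[
D(s,x)=\frac{1}{N}\sum_{j=1}^{N}F_{\Omega}(t(j))\chi_{I(s,x)}(t(j))-\int_{\mathbb{T}^{2}}F_{\Omega}(t)\chi_{I(s,x)}(t)\,dt,
\]
where $F_{\Omega}(t):=\sum_{m\in\mathbb{Z}^{2}}\chi_{\Omega}(t+m)$ is the periodization; by Poisson summation its Fourier coefficients on $\mathbb{Z}^{2}$ agree with $\widehat{\chi_{\Omega}}|_{\mathbb{Z}^{2}}$.

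Next, for a cutoff $M$ to be chosen later, I sandwich $\chi_{I(s,x)}$ between trigonometric polynomial majorants and minorants of degree $\leq M$ (for instance, products of Vaaler--Selberg approximants for the two coordinate intervals), with $L^{1}$ error $O(1/M)$. Multiplying by $F_{\Omega}$, which satisfies $\|F_{\Omega}\|_{\infty}\lesssim|\Omega|$, produces an error in $D(s,x)$ of order $|\Omega|/M$. The main term is a finite Fourier sum; unfolding the $F_{\Omega}$ factor via its Fourier series, it takes the form
\[
\sum_{0<|k|_{\infty}\leq M}\hat{P}_{s}(k)\,e^{-2\pi i k\cdot x}\sum_{l\neq -k}\widehat{\chi_{\Omega}}(l)\,\frac{S_{N}(k+l)}{N},
\]
where $S_{N}(m):=\sum_{j=1}^{N}e^{2\pi i m\cdot t(j)}$ and $|\hat{P}_{s}(k)|\lesssim\prod_{i=1,2}\frac{1}{\max(1,|k_{i}|)}$.

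The two key bounds are now: (i) the decay $|\widehat{\chi_{\Omega}}(\xi)|\leq C|\xi|^{-3/2}$ coming from strictly positive curvature of $\partial\Omega$; and (ii) the diophantine estimate on $|S_{N}|$. Since $\{1,\alpha,\beta\}$ is a $\mathbb{Q}$-basis of the degree-$3$ field $K=\mathbb{Q}(\alpha,\beta)$, Liouville's inequality applied to the nonzero element $k_{0}+k_{1}\alpha+k_{2}\beta\in K$ (with $k_{0}\in\mathbb{Z}$ the nearest integer to $k_{1}\alpha+k_{2}\beta$) yields $\|k_{1}\alpha+k_{2}\beta\|\geq c/|k|^{2}$: the product of the three Galois conjugates is a nonzero rational with bounded denominator, while the two nontrivial conjugates have size $O(|k|)$. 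Consequently $|S_{N}(k)|\leq\min(N,\,C|k|^{2})$.

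A dyadic decomposition in $|k|$ and $|l|$ now weighs these three bounds against each other. The dominant dyadic shells occur near radius $\sim N^{1/2}$, where the transition $|S_{N}|/N\sim\min(1,|k|^{2}/N)$ happens, and a careful count yields a Fourier contribution of order $M^{1/2}(\log M)^{2}/N$. Together with the truncation error $\log M/M$, this gives $D\leq C(\log M/M+M^{1/2}(\log M)^{2}/N)$; the choice $M\sim N^{2/3}$ balances the two terms and produces the desired $D\leq cN^{-2/3}\log N$. The main obstacle lies in this dyadic accounting: to obtain the exponent $2/3$ (rather than the $1/2$ that would follow from merely combining Niederreiter's rectangle bound with a general convex-discrepancy inequality), one must exploit the full $|\cdot|^{-3/2}$ decay from positive curvature pointwise against the $|k|^{2}$ worst-case growth of the exponential sum, and must handle the convolution arising from the product $F_{\Omega}\cdot\chi_{I(s,x)}$ carefully enough to lose only a single logarithmic factor to the two-dimensional rectangles.
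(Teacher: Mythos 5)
Your high-level scaffolding matches the paper's: a general Erd\H os--Tur\'an type inequality (the paper uses Theorem \ref{ET}, from \cite{CGT}, applied directly to each piece $K_i=(\text{rectangle translate})\cap\Omega$), the $|\xi|^{-3/2}$ Fourier decay from positive curvature, the degree-$3$ diophantine bound $\|k_1\alpha+k_2\beta\|\ge c/|k|^2$ (the paper's \cite[Theorem 6F]{SCH2}), and the balancing choice $R\sim N^{2/3}$. But two of the technical steps that you treat as routine are where the real work lives, and as written they fail.

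First, the unfolding of $F_\Omega$ into its Fourier series is not legitimate as a termwise estimate: the inner sum $\sum_{l}\widehat{\chi}_\Omega(l)\,S_N(k+l)/N$ is not absolutely convergent, since $|\widehat{\chi}_\Omega(l)|\asymp|l|^{-3/2}$ and $|S_N(m)|/N\le 1$ only gives $\sum_l|l|^{-3/2}=\infty$; neither the Liouville bound nor the trivial bound rescues the tail. You cannot dyadically bound a conditionally convergent sum by triangle inequality. To make this rigorous you would also have to replace $F_\Omega$ by trigonometric-polynomial majorants/minorants — i.e. run Erd\H os--Tur\'an on $\Omega$ as well — at which point you are effectively doing what the paper does: it avoids the convolution entirely by applying Theorem \ref{ET} to the set $K=I\cap\Omega$ itself, and Lemmas \ref{acca} and \ref{chi} give the crucial single-set bound $|\widehat{\chi}_K(n)|+|\widehat H_R(n)|\lesssim |n|^{-3/2}+((1+|n_1|)(1+|n_2|))^{-1}$ via the geometric decomposition of $\partial K$ into $O(1)$ curved arcs and axis-parallel segments. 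Second, the exponential-sum input you use is a \emph{pointwise} bound $|S_N(k)|\le\min(N,C|k|^2)$. This is far too weak: inserting it into $\sum_{|n|<R}|n|^{-3/2}|S_N(n)|/N$ yields a contribution of order $N^{1/4}+R^{1/2}$, not $R^{1/2}\log R/N$. What is actually needed is the Davenport-type counting argument: in a dyadic shell the numbers $\|n\cdot(\alpha,\beta)\|$ are $\gtrsim\eta/\max(|n_1|,|n_2|)^2$-separated (at most two per gap), so $\sum_{\max|n_i|\sim 2^i}\|n\cdot(\alpha,\beta)\|^{-1}\lesssim i\,2^{2i}$, a full factor $2^{2i}$ better than what $\min(N,|n|^2)$ gives termwise. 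Moreover, for the rectangle-weighted part (which in your notation is the $l=0$ piece carrying $\prod_i(1+|k_i|)^{-1}$), even the Davenport count combined with the Liouville/norm bound only yields $R\log R/N\sim N^{-1/3}\log N$; the paper needs the stronger product-type estimate from Schmidt's simultaneous approximation theorem, $\|n_1\alpha+n_2\beta\|>\gamma\big((1+|n_1|)(1+|n_2|)\big)^{-1-\varepsilon}$ (equation (\ref{schmidt})), which you never invoke. So, beyond fixing the divergent convolution, your argument is missing both the spacing/counting mechanism and Schmidt's theorem — and without them the claimed dyadic total $M^{1/2}(\log M)^2/N$ does not follow.
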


For example, one can take $\alpha=\xi, \beta=\xi^2$, where $\xi$ is a real 
root of a third degree irreducible polynomial in $\mathbb Z.$ 

In other words, Theorem \ref{HK} says that a regularity assumption on the convex set $\Omega$
suffices for  the sequence in Niederreiter's result to improve Beck's estimate $N^{-2/3}\log^4 N$. This can be obtained 
by estimating directly the discrepancy $D(  \left\{  t(j)\right\}  _{j=1}^{N})$, and avoiding the isotropic discrepancy. The main tool that will allow us to do it is a version of the Erd\H{o}s-Tur\'{a}n inequality essentially contained in \cite{CGT}.

\section{Proofs and auxiliary results}

Let us begin by recalling the above mentioned general form of the Erd\H{o}s-Tur\'{a}n inequality.
\begin{theorem}\label{ET}
There exists a positive function $\psi\left(  u\right)  $ on $\left[
0,+\infty\right)  $ with rapid decay at infinity such that for every
collection of points $\left\{  t(j)\right\}  _{j=1}^{N}\subset\mathbb R  ^{d}$, for every bounded Borel set $D\subseteq\mathbb{R}^{d}%
$, and for every $R>0$,
\begin{align*}
&  \left\vert \frac{1}{N}\sum_{j=1}^{N}\sum_{m\in\mathbb{Z}^{2}}\chi_{D
}\left(  t\left(  j\right)  +m\right)  -\left\vert D\right\vert
\right\vert \\
&  \leq\left\vert \widehat{H}_{R}\left(  0\right)  \right\vert +\sum
_{n\in\mathbb{Z}^{2},\,0<|n|<R}\left(  \left\vert \widehat{\chi}_{D
}(n)\right\vert +\left\vert \widehat{H}_{R}(n)\right\vert \right)  \left\vert
\frac{1}{N}\sum_{j=1}^{N}e^{2\pi in\cdot t(j)}\right\vert .
\end{align*}
Here%
\[
H_{R}\left(  x\right)  =\psi(R\,\mathrm{dist}(x,\partial D)),
\]
where $\mathrm{dist}$ is the Euclidean distance in $\mathbb R^d.$
\end{theorem}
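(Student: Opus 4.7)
The plan is to approximate $\chi_D$ by its convolution with a bandlimited mollifier, handle the resulting smooth part via a finite Fourier series, and absorb the boundary layer error into the companion function $H_R$.

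I would first fix once and for all a non-negative Schwartz function $\phi$ on $\mathbb{R}^d$ with $\int\phi=1$ and $\widehat{\phi}$ supported in the unit ball (for instance $\phi=\eta*\eta$ with $\widehat{\eta}$ a smooth bump supported in the half-ball). For $R>0$ set $\phi_R(x)=R^d\phi(Rx)$, so that $\widehat{\phi_R}(\xi)=\widehat{\phi}(\xi/R)$ is supported in $|\xi|<R$ and $|\widehat{\phi_R}|\le\widehat{\phi}(0)=1$ thanks to $\phi\ge0$. Put $g_R=\chi_D*\phi_R$. The crucial pointwise estimate is
\[
|\chi_D(x)-g_R(x)|\le\psi(R\,\mathrm{dist}(x,\partial D))=:H_R(x),\qquad \psi(u):=\int_{|y|\ge u}\phi(y)\,dy,
\]
obtained by observing that if $r=\mathrm{dist}(x,\partial D)$ then $\chi_D$ is constant on the ball $B(x,r)$, so only the tail $|y|\ge r$ of the convolution contributes. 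The function $\psi$ is non-negative with rapid decay at infinity.

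Writing $\widetilde f(x)=\sum_{m\in\mathbb{Z}^d}f(x+m)$ for the periodization and $\widehat{\mu}_N(n)=N^{-1}\sum_j e^{2\pi in\cdot t(j)}$, the pointwise bound immediately gives
\[
\left|\frac{1}{N}\sum_{j,m}\chi_D(t(j)+m)-|D|\right|\le\left|\frac{1}{N}\sum_j\widetilde{g_R}(t(j))-|D|\right|+\frac{1}{N}\sum_j\widetilde{H_R}(t(j)).
\]
By Poisson summation $\widetilde{g_R}$ has Fourier coefficients $\widehat{\chi_D}(n)\widehat{\phi_R}(n)$ and is therefore a trigonometric polynomial of spectrum $\{|n|<R\}$; since $\widehat{g_R}(0)=|D|$ and $|\widehat{g_R}(n)|\le|\widehat{\chi_D}(n)|$, the first summand is dominated by $\sum_{0<|n|<R}|\widehat{\chi_D}(n)||\widehat{\mu}_N(n)|$, producing the $\widehat{\chi_D}$ part of the claim.

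For the second summand I would rerun the same scheme on $H_R$: with $k_R=H_R*\phi_R$, the periodization $\widetilde{k_R}$ is a trigonometric polynomial of spectrum $\{|n|<R\}$ with $\widehat{k_R}(0)=\widehat{H_R}(0)$ and $|\widehat{k_R}(n)|\le|\widehat{H_R}(n)|$, delivering the desired $|\widehat{H_R}(0)|+\sum_{0<|n|<R}|\widehat{H_R}(n)||\widehat{\mu}_N(n)|$ for the bandlimited part. The main obstacle is the remainder $H_R-k_R$: its Fourier expansion creates a tail $\sum_{|n|\ge R}\widehat{H_R}(n)\widehat{\mu}_N(n)$ that does not appear in the advertised bound. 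Closing this gap amounts to proving an estimate of the form $\sum_{|n|\ge R}|\widehat{H_R}(n)|\le c|\widehat{H_R}(0)|$, which can be established by exploiting that $H_R$ is concentrated in an $O(R^{-1})$-tube about $\partial D$ and varies on that scale, so that repeated integration by parts (combined, when $\partial D$ is smooth, with boundary stationary phase) forces $|\widehat{H_R}(\xi)|$ to decay rapidly for $|\xi|\ge R$. This frequency-localisation of $\widehat{H_R}$ is the real technical content of the argument, and is presumably where \cite{CGT} does most of the work; every other step is standard Poisson-summation bookkeeping.
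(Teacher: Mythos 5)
Your approach replaces the paper's construction by a mollification-and-error-bound scheme, and the resulting gap you flag at the end is real and cannot be closed in the way you suggest. You split $\chi_D=g_R+(\chi_D-g_R)$ with $g_R$ bandlimited and $|\chi_D-g_R|\le H_R$. This two-sided pointwise estimate is symmetric, so when you pass to the Riemann sum you are forced to carry the full quantity $N^{-1}\sum_j\widetilde{H_R}(t(j))=\sum_{n\in\mathbb Z^d}\widehat{H_R}(n)\widehat\mu_N(n)$, and you must control the high-frequency tail $\sum_{|n|\ge R}|\widehat{H_R}(n)|$. That tail is not $O(|\widehat{H_R}(0)|)$. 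Two obstructions: (a) the theorem is stated for an arbitrary bounded Borel set $D$, so there is no boundary regularity to feed into a stationary-phase argument, and the construction must work with no geometric input whatsoever; (b) even when $\partial D$ is smooth and curved, $H_R$ is a tube function of width $\sim R^{-1}$ and is merely Lipschitz (not $\mathcal C^1$) across $\partial D$, so in $d=2$ one only gets decay of the type $|\widehat{H_R}(n)|\lesssim R^{-1}|n|^{-3/2}$ (this is exactly Lemma~\ref{acca}), whence $\sum_{|n|\ge R}|\widehat{H_R}(n)|\gtrsim R^{-1/2}$ while $|\widehat{H_R}(0)|\sim R^{-1}$; the ratio diverges as $R\to\infty$.

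The paper avoids this entirely by using a \emph{one-sided} Selberg/Beurling-type construction, which is the crucial idea you are missing. Instead of mollifying $\chi_D$ and estimating the error, one builds two bandlimited functions $A=K_R*(\chi_D-H_R)$ and $B=K_R*(\chi_D+H_R)$, where $K$ is a fixed positive Fej\'er-type kernel of exponential type $<1$ with mean one and $\psi$ is a suitably normalized tail integral of $K$. The point is the ordering $A\le\chi_D\le B$ together with the smallness $|B-A|\le 4\psi(R\,\mathrm{dist}(\cdot,\partial D)/2)$. Since $\widehat K$ vanishes outside the unit ball, both $\widetilde A$ and $\widetilde B$ are trigonometric polynomials with spectrum in $\{|n|<R\}$, and the sandwich inequality lets you compare $N^{-1}\sum_j\widetilde{\chi_D}(t(j))$ directly with $N^{-1}\sum_j\widetilde A(t(j))$ and $N^{-1}\sum_j\widetilde B(t(j))$ \emph{before} any Fourier expansion; there is never any tail beyond $|n|<R$ to absorb, and no regularity of $\partial D$ is used. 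Your first half (the treatment of the $\widehat\chi_D$ contribution and the Poisson-summation bookkeeping) is fine and essentially coincides with what the majorant/minorant does, but the second half needs to be replaced by the one-sided construction; an estimate of the form you propose is false in the stated generality.
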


\begin{proof}
Take a smooth radial function $m\left(  \xi\right)$ supported in $\left\vert
\xi\right\vert <1/2$ and with $\int_{\mathbb{R}^{d}}m^{2}\left(
\xi\right)  d\xi=1,$ and define%
\begin{align*}
K\left(  x\right)   &  =\int_{\mathbb{R}^{d}}\left(  1+\left\vert
\xi\right\vert ^{2}\right)  ^{-\left(  d+1\right)  /2}\left( m\ast m\right)\left(
\xi\right)  e^{2\pi i\xi\cdot x}d\xi,\\
\psi\left(  u\right)   &  =e^{2\pi}\left(  \int_{\left\vert y\right\vert
\leq1}K\left(  y\right)  dy\right)^{-1}  \int_{\left\{  \left\vert y\right\vert
\geq u\right\}  }K\left(  y\right)  dy
\end{align*}
Since $\widehat K(\xi)=0$ if $|\xi|\ge1$, it follows from the Paley-Wiener theorem that $K(x)$ is an entire function of exponential
type smaller than $1$, positive with mean $1$, all its derivatives have rapid
decay at infinity, and $\left\vert \widehat{K}\left(  \xi\right)  \right\vert
\leq1$ for every $\xi\in\mathbb R^d.$ If we set $K_{R}\left(  x\right)  =R^{d}K\left(  Rx\right), $ 
then the functions
\begin{align*}
A\left(  x\right)   &  =\int_{\mathbb{R}^{d}}K_{R}\left(  y\right)
\left(  \chi_{D}\left(  x-y\right)  -H_{R}\left(  x-y\right)  \right)
dy\\
B\left(  x\right)   &  =\int_{\mathbb{R}^{d}}K_{R}\left(  y\right)
\left(  \chi_{D}\left(  x-y\right)  +H_{R}\left(  x-y\right)  \right)
dy,
\end{align*}
are entire of exponential type smaller than $R$ and%
\[
A\left(  x\right)  \leq\chi_{D}\left(  x\right)  \leq B\left(  x\right)
,\quad\left\vert B\left(  x\right)  -A\left(  x\right)  \right\vert \leq
4\psi\left(  R\mathrm{dist}(x,\partial D)/2\right)
\]
{(see \cite{CGT} for the details).}
Periodization gives%
\[
\sum_{m\in\mathbb{Z}^{d}}A\left(  x+m\right)  \leq\sum_{m\in\mathbb{Z}^{d}%
}\chi_{D}\left(  x+m\right)  \leq\sum_{m\in\mathbb{Z}^{d}}B\left(
x+m\right),
\]
{and, by the Poisson summation formula,}
\begin{align*}
\sum_{m\in\mathbb{Z}^{d}}A\left(  x+m\right)
 & =\sum_{n\in\mathbb{Z}^{d}}\widehat{K}\left(  R^{-1}n\right)
\left(  \widehat{\chi}_{D}\left(  n\right)  -\widehat{H}_{R}\left(
n\right)  \right)  e^{2\pi in\cdot x},\\
\sum_{m\in\mathbb{Z}^{d}%
}B\left(  x+m\right) & =\sum_{n\in\mathbb{Z}^{d}}\widehat{K}\left(  R^{-1}n\right)
\left(  \widehat{\chi}_{D}\left(  n\right)  +\widehat{H}_{R}\left(
n\right)  \right)  e^{2\pi in\cdot x}%
\end{align*}
are trigonometric polynomials of degree at most $R.$ It now follows that%
\begin{align*}
&  \frac{1}{N}\sum_{j=1}^{N}\sum_{m\in\mathbb{Z}^{d}}\chi_{D}\left(
t\left(  j\right)  +m\right)  -\left\vert D\right\vert \\
&  \leq\frac{1}{N}\sum_{j=1}^{N}\sum_{m\in\mathbb{Z}^{d}}B\left(  t\left(
j\right)  +m\right)  -\left\vert D\right\vert \\
&  =\frac{1}{N}\sum_{j=1}^{N}\sum_{n\in\mathbb{Z}^{d}}\widehat{K}\left(
R^{-1}n\right)  \left(  \widehat{\chi}_{D}\left(  n\right)  +\widehat{H}%
_{R}\left(  n\right)  \right)  e^{2\pi in\cdot t\left(  j\right)  }-\left\vert
D\right\vert \\
&  =\widehat{H}_{R}\left(  0\right)  +\sum_{n\in\mathbb{Z}^{d},0<\left\vert
n\right\vert <R}\widehat{K}\left(  R^{-1}n\right)  \left(  \widehat{\chi}_{D
}\left(  n\right)  +\widehat{H}_{R}\left(  n\right)  \right)  \frac{1}{N}%
\sum_{j=1}^{N}e^{2\pi in\cdot t\left(  j\right)  }\\
&  \leq\left\vert \widehat{H}_{R}\left(  0\right)  \right\vert +\sum
_{n\in\mathbb{Z}^{d},0<\left\vert n\right\vert <R}\left(  \left\vert
\widehat{\chi}_{D}\left(  n\right)  \right\vert +\left\vert
\widehat{H}_{R}\left(  n\right)  \right\vert \right)  \left\vert \frac{1}%
{N}\sum_{j=1}^{N}e^{2\pi in\cdot t\left(  j\right)  }\right\vert .
\end{align*}
Similar estimates from below can be proved, if one uses $A\left(  x\right)  $
instead of $B\left(  x\right)  .$
\end{proof}

A second tool in the proof is the estimate of the Fourier transform of
arcs of curves in  $\mathbb R^2$. The next two lemmas are well known (see e.g.
\cite[Chapter 8]{Stein}). We recall 
the proof of the first one 
both to help the unfamiliar reader, and to emphasize 
its two-dimensional nature.

{In what follows, for any arc $\gamma$ we will denote with
$\widehat\gamma(\xi)\,$ the Fourier transform of its arclength measure.} 

\begin{lemma}
\label{fourier_one}
Let $\Omega $ be a convex set in $\mathbb{R}^{2}$ with a $\mathcal{C}^{2}$ boundary
  with non-vanishing curvature. Let $\gamma $ be an
arc of $\partial \Omega $ and $\kappa_\mathrm {min} >0$ be the 
{minimum of the} curvature of $\gamma.$
Then for $\left\vert \xi \right\vert \geq 1,$ the Fourier transform is bounded by%
\[
\left\vert \widehat{\gamma }\left( \xi \right) \right\vert \leq \min \left( \ell,c\frac{ 1+\kappa_\mathrm {min}  ^{-1/2} }{%
|\xi| ^{1/2}}\right).
\]%
Here $\ell$ is the length of the arc and $c$ is a universal constant. 
\end{lemma}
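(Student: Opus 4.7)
The bound $|\widehat\gamma(\xi)|\le\ell$ is immediate from $|e^{-2\pi i\xi\cdot x}|=1$, so the content is the second estimate, which I would obtain by an oscillatory integral (van der Corput) argument tailored to a plane curve.

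Parametrize $\gamma$ by arclength on $[0,\ell]$, so $|\gamma'|\equiv 1$. Writing $\xi=|\xi|\omega$ with $\omega\in S^1$ and $\phi(s)=\omega\cdot\gamma(s)$,
\[
\widehat\gamma(\xi)=\int_0^\ell e^{-2\pi i|\xi|\phi(s)}\,ds.
\]
The crucial two-dimensional input is that differentiating $|\gamma'|^2\equiv 1$ forces $\gamma''(s)=\kappa(s)n(s)$ for a unit normal $n(s)$, so $\phi'(s)=\omega\cdot\gamma'(s)$ and $\phi''(s)=\kappa(s)\,\omega\cdot n(s)$, and the orthonormality of the frame $(\gamma'(s),n(s))$ yields
\[
(\phi'(s))^2+\bigl(\phi''(s)/\kappa(s)\bigr)^2=1.
\]
Consequently, at every $s$ either $|\phi'(s)|\ge 1/\sqrt2$ or $|\phi''(s)|\ge\kappa_{\min}/\sqrt2$.

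Next I would partition $[0,\ell]$ into $O(1)$ subintervals on each of which one of these two alternatives holds throughout and, on pieces of the first kind, $\phi'$ is also monotone and of constant sign. This is feasible because, writing $\gamma'(s)=(\cos\theta(s),\sin\theta(s))$, one has $\phi'(s)=\cos(\theta(s)-\theta_\omega)$ with $\theta'(s)=\kappa(s)>0$; and since $\gamma$ sits on a convex boundary, the total variation of $\theta$ on $[0,\ell]$ is at most $2\pi$, so the level set $\{|\phi'|=1/\sqrt2\}$ has at most a bounded number of points. On a subinterval of the first type the first van der Corput lemma produces a contribution of order $|\xi|^{-1}$; on one of the second type, $\phi''$ has constant sign with $|\phi''|\ge\kappa_{\min}/\sqrt2$, so the second van der Corput lemma gives order $(|\xi|\kappa_{\min})^{-1/2}$.

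Summing the $O(1)$ contributions and using $|\xi|\ge 1$ to absorb $|\xi|^{-1}$ into $|\xi|^{-1/2}$, the total is bounded by $c(1+\kappa_{\min}^{-1/2})|\xi|^{-1/2}$, which is the claimed estimate. The main obstacle is the bookkeeping of the partition uniformly in the direction $\omega$ and verifying the monotonicity of $\phi'$ required by the first van der Corput lemma on each piece of the first type; the identity above, combined with the positivity and continuity of $\kappa$ together with the bounded angular variation of $\theta$ on a convex arc, is precisely what makes this routine.
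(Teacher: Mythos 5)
Your proposal is correct and matches the paper's own argument in all essentials: the trivial bound $\ell$, the arclength parametrization with $\phi(s)=\omega\cdot\gamma(s)$, the orthonormality identity $(\phi')^2+(\phi''/\kappa)^2=1$ forcing the dichotomy $|\phi'|\ge 2^{-1/2}$ or $|\phi''|\ge\kappa_{\min}2^{-1/2}$, the partition into $O(1)$ pieces of two types (justified, as in the paper, by the bounded turning of the tangent on a convex arc), and the first and second van der Corput lemmas giving $|\xi|^{-1}$ and $(\kappa_{\min}|\xi|)^{-1/2}$ respectively, combined via $|\xi|\ge 1$. The only cosmetic difference is that you use the turning angle $\theta(s)$ explicitly and split further to make $\phi'$ monotone on first-type pieces, whereas the paper instead invokes the variant of van der Corput's lemma that only needs $\phi''$ to change sign boundedly often on those pieces.
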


\begin{proof}
Let $r\left( \tau\right) $ be the parametrization of $\gamma $ with respect to
arclength, so that 
\[
\widehat{\gamma }\left( \xi \right) =\int_{0}^{\ell}e^{-2\pi ir\left( \tau \right)
\cdot \xi }d\tau .
\]
For any $\xi $ we have the trivial estimate%
\[
\left\vert \int_{0}^{\ell}e^{-2\pi ir\left( \tau \right) \cdot \xi }d\tau \right\vert
\leq \ell.
\]%
Assume $\xi \neq 0$ and let 
\[
\xi =\rho \eta 
\]%
where $\left\vert \eta \right\vert =1$ and $\rho >0$. First consider the (at
most) 
{three} intervals $I_{1},\,I_2$ and $I_{3}$ where $\left\vert r^{\prime }\left(
\tau \right) \cdot \eta \right\vert >2^{-1/2}$. By Van der Corput's lemma, since 
$\left\vert r^{\prime }\left( \tau \right) \cdot \eta \right\vert >2^{-1/2}$ and the expression
$r^{\prime \prime }\left( \tau \right) \cdot \eta =-\kappa \left( \tau \right)
\nu\left( \tau \right) \cdot \eta $ changes sign at most once (here $\nu\left( \tau \right) $ 
and $\kappa(\tau )$
are respectively the
outer normal
and the curvature of $\gamma$ at a point  $r\left( \tau \right) $), then%
\[
\left\vert \int_{I_{i}}e^{-2\pi i\rho r\left( \tau \right) \cdot \eta
}d\tau \right\vert \leq \frac{c_{1}}{\rho }
\]%
($i=1,2,3$). The constant $c_{1}$ is universal. If $\left\vert r^{\prime }\left( \tau \right)
\cdot \eta \right\vert \leq 2^{-1/2}$ we have $\left\vert \nu\left( \tau \right)
\cdot \eta \right\vert \ge2^{-1/2}$ so that 
\[
\left\vert r^{\prime \prime
}\left( \tau \right) \cdot \eta \right\vert =\kappa \left( \tau \right) 
\left\vert \nu\left( \tau \right) \cdot \eta \right\vert >\kappa_{\mathrm{min}} 2^{-1/2}.
\]
Thus, by Van der Corput's lemma, for the at most 
{three} intervals $J_{1},\,J_2$ and $%
J_{3}$ where $\left\vert r^{\prime }\left( \tau \right) \cdot \eta \right\vert
\leq 2^{-1/2}$, we have%
\[
\left\vert \int_{J_{j}}e^{-2\pi i\rho r\left( \tau \right) \cdot \eta
}d\tau \right\vert \leq 
\frac{c_{2}}{\left( \kappa_\mathrm{min} \rho \right) ^{1/2}}
\]
($j=1,2,3$). Again, $c_{2}$ is a universal constant. Finally, 
\[
\left\vert \int_{0}^{\ell}e^{-2\pi i\rho r\left( \tau \right) \cdot \eta
}d\tau \right\vert \leq \min \left( \ell,\frac{3c_{1}}{\rho }+\frac{%
3c_{2}}{\left( \kappa_\mathrm {min}  \rho \right) ^{1/2}}\right) .
\]%
{When} $\rho \geq 1,$ this gives%
\[
\left\vert \int_{0}^{\ell}e^{-2\pi i\rho r\left( \tau \right) \cdot \eta
}d\tau \right\vert \leq \min \left( \ell,c\frac{1+\kappa_\mathrm {min}  ^{-1/2} }{%
\rho ^{1/2}}\right).
\]
\end{proof}

\begin{lemma}
\label{fourier_two}
The Fourier transform of the arclength measure on the  segment $\gamma $ joining two points $x$
and $y$ in $\mathbb{R}^{2}$ is%
\[
\widehat{\gamma }\left( \xi \right) =\left\vert x-y\right\vert \frac{\sin \left( \pi \left(
x-y\right) \cdot \xi \right) }{\pi \left( x-y\right) \cdot \xi }e^{-2\pi i%
\frac{\left( x+y\right) }{2}\cdot \xi }.
\]%
In particular, calling $\ell=\left\vert x-y\right\vert $ and $\theta =\frac{x-y}{%
\left\vert x-y\right\vert }$, we have%
\[
\left\vert \widehat{\gamma }\left( \xi \right) \right\vert \leq \min \left(
\ell,\frac{1}{\pi \left\vert \xi \cdot \theta \right\vert }\right) .
\]
\end{lemma}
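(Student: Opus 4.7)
The plan is a direct computation of an elementary integral. I would first parametrize the segment symmetrically about its midpoint by setting $r(t)=\frac{x+y}{2}+t(x-y)$ for $t\in[-1/2,1/2]$, so that $r(-1/2)=y$, $r(1/2)=x$, and $|r'(t)|=|x-y|=\ell$ gives the arclength element. Then by definition
\[
\widehat{\gamma}(\xi)=\int_{-1/2}^{1/2}e^{-2\pi i r(t)\cdot\xi}\,\ell\,dt.
\]

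Next I would factor the phase $e^{-2\pi i\frac{x+y}{2}\cdot\xi}$ out of the integral, reducing the problem to evaluating $\int_{-1/2}^{1/2}e^{-2\pi i t(x-y)\cdot\xi}\,dt$. This is the standard one-dimensional computation
\[
\int_{-1/2}^{1/2}e^{-2\pi i t a}\,dt=\frac{\sin(\pi a)}{\pi a}
\]
with $a=(x-y)\cdot\xi$, which produces the claimed closed form after multiplication by $\ell$.

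For the inequality, the trivial bound $|\widehat{\gamma}(\xi)|\leq\int_{\gamma}1\,d\sigma=\ell$ follows from $|e^{-2\pi i r(t)\cdot\xi}|=1$, while using $|\sin u|\leq 1$ in the closed form gives
\[
|\widehat{\gamma}(\xi)|\leq\ell\cdot\frac{1}{\pi|(x-y)\cdot\xi|}=\frac{1}{\pi|\xi\cdot\theta|},
\]
since $x-y=\ell\theta$. Taking the minimum yields the stated estimate. There is no real obstacle here: the entire proof is a one-variable sinc computation followed by two routine bounds, and its only purpose is to record the explicit formula in the form in which it will be invoked later, alongside the curved-arc estimate of Lemma \ref{fourier_one}.
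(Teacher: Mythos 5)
Your computation is correct and is exactly the ``explicit calculation'' the paper leaves to the reader: the symmetric midpoint parametrization, the one-dimensional sinc integral, and the two bounds ($|\widehat\gamma(\xi)|\le\ell$ trivially and $|\sin u|\le1$ combined with $x-y=\ell\theta$) all check out. Nothing further is needed.
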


\begin{proof}
This is just 
{an explicit} calculation. 
\end{proof}

Before we proceed with the proof of Theorem \ref{HK}, we need a few results on
convex sets in $\mathbb R^d$. Let us begin with some terminology.

\begin{definition}
\label{def}
Let $K$ be a non-empty compact convex subset (a ``convex body'') of $\mathbb R^d.$ The signed distance function
$\delta_K$ is defined by
\[
\delta_K(x)=\left\{
\begin{array}
[c]{ll}%
\mathrm{dist}(x,\,\partial K)  & \text{if }x\in K\\
-\mathrm{dist}(x,\,\partial K)  & \text{if }x\notin K.\\
\end{array}
\right.
\]
For any real number $u$, define%
\[
K^{u}=\left\{
x\in \mathbb R^d:\delta_K(x)\ge u
\right\}
\]
and%
\[
K_{u}=\left\{
x\in \mathbb R^d:\delta_K(x)= u
\right\}
\]
\end{definition}
The signed distance function is Lipschitz continuous with constant $1,$ and $|\nabla\delta_K|=1$ almost everywhere (see \cite[Section 14.6]{GT}).

\begin{definition}Let $B$ be the closed unit ball centered at the origin. If $K$ is a convex body in $\mathbb R^d,$ then the outer 
parallel body of $K$ at distance $r$ is defined as the Minkowski sum of  $K$ and $rB$,
\[
K+rB=\{x+y:x\in K,\,|y|\le r\},
\]
while the inner parallel body of $K$ at distance $r$ is defined as the Minkowski difference of $K$ and 
$rB,$
\[
K\div rB=\{x:x+rB\subset K\},
\]
\end{definition}

\begin{lemma}
\label{intersection}
Let $K$ be a convex body in $\mathbb{R}^{d}$.

\noindent (i) For any real number $u,$ the set
$K^u$ is the outer or the inner parallel body of $K$ at distance $|u|$, according to whether $u$ is negative 
or positive, that is
\begin{eqnarray*}
K^u&=&K+|u|B,\text{ if }u\le0,\\
K^u&=&K\div uB,\text{ if }u>0.
\end{eqnarray*}
(ii) For any real number $u,$ the set $K^u$ is convex (possibly empty).

\noindent (iii) If $M$ is a convex body too, then for every $u\ge0$,%
\[
\left(  M\cap K\right)  _{u}=\left(  M_{u}\cap K^{u}\right)  \cup\left(
M^{u}\cap K_{u}\right)  .
\]
\end{lemma}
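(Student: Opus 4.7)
The plan is to treat the three parts in order, each by direct unwinding of the definition of the signed distance $\delta_K$, together with some convex-geometric bookkeeping of Minkowski operations for (ii).

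For (i) I unwind ``$\delta_K(x)\ge u$''. If $u\le 0$, the inequality holds automatically for $x\in K$, while for $x\notin K$ it reads $\mathrm{dist}(x,\partial K)\le|u|$; since $K$ is closed and convex, $\mathrm{dist}(x,\partial K)=\mathrm{dist}(x,K)$ for $x\notin K$, so $K^u=\{x:\mathrm{dist}(x,K)\le|u|\}=K+|u|B$. If instead $u>0$, the condition forces $x\in K$ with $\overline{B}(x,u)\subseteq K$, i.e., $x+uB\subseteq K$, i.e., $x\in K\div uB$. Part (ii) is then immediate: the outer parallel body $K+|u|B$ is a Minkowski sum of two convex sets and so is convex, while
\[
K\div uB=\bigcap_{b\in uB}(K-b)
\]
realizes the inner parallel body as an intersection of convex translates of $K$, hence it is convex (possibly empty).

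For (iii) the main step is the identity
\[
\delta_{M\cap K}(x)=\min\bigl(\delta_M(x),\delta_K(x)\bigr)\qquad\text{for every }x\in M\cap K.
\]
Both inequalities come from the clean equivalence ``$\delta_S(x)\ge r$ iff $\overline{B}(x,r)\subseteq S$'', valid for any closed set $S$. For ``$\ge$'', the closed ball of radius $r=\min(\delta_M(x),\delta_K(x))$ sits in both $M$ and $K$, hence in $M\cap K$. For ``$\le$'', the ball of radius $\delta_{M\cap K}(x)$ lies in $M\cap K$ and therefore in each of $M$ and $K$ separately. Granting the identity, the condition $\delta_{M\cap K}(x)=u$ with $u\ge 0$ is equivalent to $x\in M\cap K$ together with $\min(\delta_M(x),\delta_K(x))=u$, and this splits into exactly the two cases ``$\delta_M(x)=u$ and $\delta_K(x)\ge u$'' (so $x\in M_u\cap K^u$) and ``$\delta_K(x)=u$ and $\delta_M(x)\ge u$'' (so $x\in M^u\cap K_u$); the reverse inclusion is read off by running the same equivalence backwards, invoking (i) to interpret $K^u$ and $M^u$ as superlevel sets of $\delta$.

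The only real obstacle is the min-identity for $\delta_{M\cap K}$; everything else is definition-chasing. Some mild attention is required at $u=0$ or in degenerate cases where $M\cap K$ has empty interior (say, when $M$ and $K$ only touch on their boundaries), but in each such situation the two sides of (iii) reduce to the same set by direct inspection, and the general argument goes through unchanged.
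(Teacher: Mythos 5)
Your argument is correct, and for parts (i) and (ii) it is essentially what the paper has in mind: (i) is a direct unwinding of the definition of $\delta_K$, and (ii) expresses the outer and inner parallel bodies as a Minkowski sum of convex sets and an intersection of convex translates respectively. For part (iii) your route is genuinely different from the paper's. The paper's sketch proceeds through three facts: $\partial(K^u)=K_u$, the superlevel-set identity $(M\cap K)^u=M^u\cap K^u$ for $u\ge0$, and the boundary formula $\partial(A\cap B)=(\partial A\cap B)\cup(A\cap\partial B)$ valid for closed sets; the decomposition of $(M\cap K)_u$ then falls out by applying the boundary formula to $M^u$ and $K^u$. You instead prove the pointwise identity $\delta_{M\cap K}=\min(\delta_M,\delta_K)$ on $M\cap K$ via the characterization ``$\delta_S(x)\ge r$ iff $\overline B(x,r)\subseteq S$'' (which holds for any closed $S$, $x\in S$, $r\ge0$, by the usual segment argument), and then read off the level set $\{\delta_{M\cap K}=u\}$ by splitting the minimum into its two cases. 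These are close cousins: your min-identity is precisely the pointwise version of the paper's $(M\cap K)^u=M^u\cap K^u$, and where the paper then invokes the boundary operator you argue directly at the level of $\delta$. Your version has the small advantage of being self-contained (you prove the key identity rather than assert it) and of avoiding the topological boundary formula altogether; the paper's has the advantage of brevity. Your closing worry about degenerate intersections (empty interior, touching boundaries) is unnecessary: nothing in either argument requires $M\cap K$ to have nonempty interior, only that $M$, $K$, and hence $M\cap K$ be closed, so no separate check is needed.
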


\begin{proof}
Point (i) follows easily from the definitions, while the proof of (ii) can be found in \cite[Chapter 3]{S}.
As for point (iii), we sketch a proof, highlighting the main steps. First observe that $\partial (K^u)=K_u$ and that $(M\cap K)^u=M^u\cap K^u$ when $u\ge0.$ The thesis now follows after the observation that for any two compact sets $A$ and $B$ one has
\[
\partial (A\cap B)=(\partial A\cap B)\cup(A\cap\partial B).
\]
\end{proof}

\begin{lemma}\label{smooth}
Let $K$ be a convex body in $\mathbb R^d$ with $\mathcal C^2$ boundary and let 
$\kappa_\mathrm{max}$ be the maximum of all the principal curvatures of $\partial K.$  Finally, let 
\[
\Gamma=\Gamma(K,\kappa_\mathrm{max})=\{x:-(2\kappa_\mathrm{max})^{-1}<\delta_K(x)<(2\kappa_\mathrm{max})^{-1}\}.
\] 
Then $\delta_K\in\mathcal C^2(\Gamma).$ Furthermore, the level set $K_u$ is $\mathcal C^2$ whenever
$|u|<(2\kappa_\mathrm{max})^{-1}$ and its principal curvatures at a point $x$ are given by 
\[
\kappa_j(x)=\frac{\kappa_j(y)}{1-u\kappa_j(y)},\quad j=1,\ldots,d-1,
\]
where $y$ is the unique point of $\partial K$ such that $\mathrm{dist}(x,y)=|u|$ and 
$\kappa_j(y)$ are the principal curvatures of $\partial K$ at $y.$
\end{lemma}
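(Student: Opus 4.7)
The plan is to parametrize a tubular neighborhood of $\partial K$ via the normal map
\[
\Phi(y,u) = y - u\,\nu(y), \qquad (y,u)\in\partial K\times\mathbb R,
\]
where $\nu$ is the outer unit normal to $\partial K$. Since $\partial K$ is $\mathcal C^2$, the map $\nu$ is $\mathcal C^1$ and hence so is $\Phi$. In an orthonormal principal-direction frame $e_1,\ldots,e_{d-1}$ at $y$, the differential $d\Phi_{(y,u)}$ is block-diagonal with tangential eigenvalues $1-u\kappa_j(y)$ and normal entry $-1$. Whenever $|u|<(2\kappa_\mathrm{max})^{-1}$, every tangential eigenvalue lies in $(1/2,3/2)$, so $d\Phi$ is invertible throughout $\partial K\times(-(2\kappa_\mathrm{max})^{-1},(2\kappa_\mathrm{max})^{-1})$.

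Next I would upgrade this local invertibility to a global diffeomorphism onto $\Gamma$. Convexity of $K$ makes the nearest-point projection from the exterior single-valued, so $\Phi$ is injective for $u\le 0$. For $0<u<(2\kappa_\mathrm{max})^{-1}$, injectivity follows from the standard tubular-neighborhood argument (local invertibility combined with compactness of $\partial K$). Since $\delta_K\circ\Phi(y,u)=u$, inverting $\Phi$ shows that $\delta_K$ is as smooth on $\Gamma$ as $\Phi^{-1}$, which a priori gives only $\mathcal C^1$. To reach $\mathcal C^2$, I would appeal to the classical result from \cite{GT}, Section 14.6, already cited in the excerpt: the signed distance to a $\mathcal C^2$ boundary is $\mathcal C^2$ on a tubular neighborhood of width comparable to $1/\kappa_\mathrm{max}$.

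Granting $\delta_K\in\mathcal C^2(\Gamma)$, the level set $K_u=\Phi(\partial K,u)=\delta_K^{-1}(u)$ is automatically a $\mathcal C^2$ hypersurface. Its outer unit normal at $x=\Phi(y,u)$ is parallel to $\nabla\delta_K(x)=-\nu(y)$, and a sign check yields $\tilde\nu(x)=\nu(y)$. The curvature formula then falls out of one differentiation: from the identity $\tilde\nu\circ\Phi(\cdot,u)=\nu$, the chain rule applied to a principal direction $e_j$ gives
\[
d\tilde\nu(\tilde X)=d\nu(e_j)=\kappa_j(y)\,e_j,\qquad \tilde X:=d\Phi(e_j)=(1-u\kappa_j(y))\,e_j,
\]
so $\tilde X$ is an eigenvector of the shape operator of $K_u$ with eigenvalue $\kappa_j(y)/(1-u\kappa_j(y))$, as claimed.

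The main obstacle is the $\mathcal C^2$ regularity of $\delta_K$: the normal map $\Phi$ is only $\mathcal C^1$ (one derivative is spent passing from $\partial K$ to $\nu$), so directly inverting $\Phi$ does not suffice, and the missing derivative must be extracted from the eikonal equation $|\nabla\delta_K|=1$ via an implicit-function-style bootstrap. I would short-circuit this by quoting Gilbarg--Trudinger. Everything afterwards is elementary linear algebra in the principal-direction frame, together with the observation that the three pieces of Lemma \ref{intersection} are never needed for this lemma, only for subsequent applications.
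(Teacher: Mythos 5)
Your proposal is correct and takes essentially the same route as the paper: the paper's entire proof is the citation ``This is essentially a reformulation of Lemmas 14.16 and 14.17 in Gilbarg--Trudinger,'' and you rightly identify that the missing second derivative of $\delta_K$ cannot be obtained by inverting the $\mathcal C^1$ normal map alone and must be quoted from that reference, which is exactly what the paper does. Your reconstruction of the normal-map differential, the eigenvalue bound, and the shape-operator computation $d\tilde\nu(\tilde X)=\kappa_j(y)e_j=\frac{\kappa_j(y)}{1-u\kappa_j(y)}\tilde X$ is a correct unpacking of the content of GT Lemmas 14.16--14.17.
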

\begin{proof}
This is essentially a reformulation of Lemmas 14.16 and 14.17 in \cite{GT} for the case of convex bodies.
\end{proof}

Let us now move back to the two-dimensional case. In the next two lemmas we estimate
the Fourier transforms of the functions $\chi_D$ and $H_R$ in Theorem \ref{ET}, for the specific 
type of sets $D$ that one needs in the proof of Theorem \ref{HK}.
\begin{lemma}
\label{acca}
Let $\Omega$ be a convex body in $\mathbb R^2$ with $\mathcal C^2$ boundary with non-vanishing curvature and let $\kappa_\mathrm{min}$ and $\kappa_\mathrm{max}$ be the minimum and the maximum of the curvature of $\partial \Omega$. Let $I$ be a rectangle
contained in a unit square with sides parallel to the axes, and call $K=\Omega\cap I$. Then there exists a constant $c$ depending 
only on $\kappa_\mathrm{min}$ such that for all $R\ge4\kappa_\mathrm{max}^2$ and for every  $n=(n_1,n_2)\in\mathbb Z^2$ with $0<|n|<R,$
\[
|\widehat H_R(n)|\le c\frac 1 {|n|^{3/2}}+c\frac 1{1+|n_1|}\frac 1{1+|n_2|}.
\]
Here, $H_R(x)$ is the function defined in Theorem \ref{ET} by
$H_R(x)=\psi(R|\delta_K(x)|).$ Finally, there is a universal constant $c>0$ such that 
for all $R\ge1,$ 
\[
|\widehat H_R(0)|\le \frac c R. 
\]
\end{lemma}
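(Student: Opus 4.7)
My plan is to express $\widehat{H}_R(n)$ via the coarea formula: since $|\nabla \delta_K| = 1$ almost everywhere,
\[
\widehat{H}_R(n) = \int_{\mathbb{R}} \psi(R|u|) \left( \int_{K_u} e^{-2\pi i n \cdot x}\, d\mathcal{H}^1(x) \right) du,
\]
reducing the problem to bounding the Fourier transform of arclength on each level curve $K_u$. For $u \ge 0$, Lemma~\ref{intersection}(iii) decomposes $K_u$ into arcs of $\Omega_u$ (inside $I^u$) and portions of $\partial I^u$ (inside $\Omega^u$); the latter consist of at most four axis-parallel segments. For $u < 0$, the outer parallel body $K^u = K + |u|B$ has boundary consisting of arcs of $\Omega_u$, axis-parallel segments coming from $\partial I$, and up to four quarter-circle arcs of radius $|u|$ at the corners of $K$. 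I will bound the curved pieces against the $c/|n|^{3/2}$ term and the straight pieces against the $c/((1+|n_1|)(1+|n_2|))$ term.

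For the curved pieces, I restrict to the range $|u| < (2\kappa_{\max})^{-1}$, where Lemma~\ref{smooth} applies and gives that $\Omega_u$ is $\mathcal{C}^2$ with curvature $\kappa_j/(1-u\kappa_j) \ge \kappa_{\min}/2$. Lemma~\ref{fourier_one} then yields $|\widehat{\Omega_u \cap I^u}(n)| \leq c(\kappa_{\min})\,|n|^{-1/2}$, and integration against $\psi(R|u|)$ produces a factor $c/R$; since $|n| < R$ the total is $O(|n|^{-3/2})$. The corner quarter-circles of radius $|u|$ have curvature $|u|^{-1}$, so Lemma~\ref{fourier_one} gives a factor $|u|^{1/2}/|n|^{1/2}$; the weight $\psi(R|u|)(1+|u|^{1/2})$ integrates to $O(1/R)$, giving the same $c/|n|^{3/2}$ contribution. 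For $|u| \geq (2\kappa_{\max})^{-1}$, the hypothesis $R \geq 4\kappa_{\max}^2$ combined with the rapid decay of $\psi$ makes this tail of order $R^{-M}$ for any $M$, absorbed in $c/|n|^{3/2}$.

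For a horizontal segment of $I_u$ at height $y_0 + u$ with endpoints $a(u), b(u)$ satisfying $b(u) - a(u) \le 1$, Lemma~\ref{fourier_two} bounds its Fourier transform by $\min(b(u)-a(u), 1/(\pi|n_1|)) \le c/(1+|n_1|)$, so after integration against $\psi(R|u|)$ the contribution is bounded by $c/(R(1+|n_1|))$. Since $|n_2| \le |n| < R$ gives $1/R \le c/(1+|n_2|)$, this is $\le c/((1+|n_1|)(1+|n_2|))$; the vertical segments yield the symmetric bound. For $n = 0$, the coarea identity gives $\widehat{H}_R(0) = \int \psi(R|u|)\,\mathcal{H}^1(K_u)\,du$, and the Steiner-type estimate $\mathcal{H}^1(K_u) \leq C(1+|u|)$ (valid since $K = \Omega \cap I$ is convex and contained in a unit square) combined with $\int \psi(R|u|)\,du \leq c/R$ and $\int \psi(R|u|)|u|\,du \leq c/R^2$ yields $|\widehat{H}_R(0)| \leq c/R$. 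The main technical hurdle is matching the arc and segment pieces of $K_u$ near the (at most four) corners of $K$ where $\partial\Omega$ meets $\partial I$ and the decomposition degenerates; I expect to handle this by isolating each corner in a window of size $O(1/R)$ whose contribution is $O(1/R^2)$, which is absorbed into both terms of the bound.
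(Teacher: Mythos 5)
Your proposal is correct and follows essentially the same route as the paper: coarea formula, decomposition of each level set $K_u$ into $\mathcal C^2$ arcs from $\Omega_u$, axis-parallel segments, and (for $u<0$) circular arcs at the corners, with Lemmas \ref{fourier_one} and \ref{fourier_two} supplying the Fourier estimates and the hypothesis $R\ge 4\kappa_{\max}^2$ making the tail in $u$ negligible via the rapid decay of $\psi$. Two minor points worth flagging: the number of circular arcs in $K_u$ for $u<0$ can be up to eight rather than four, since each of the up to eight vertices of $K=\Omega\cap I$ contributes one arc of angle equal to the exterior angle there (not necessarily a quarter-circle); and your concluding concern about a ``window of size $O(1/R)$'' near the corners is unnecessary, because for each fixed $u$ the curve $K_u$ decomposes cleanly into finitely many arcs and segments whose Fourier transforms are simply added and bounded one by one, so no matching of pieces arises.
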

\begin{proof}
By the coarea formula (see \cite[Theorem 2, page 117]{EG}), since $|\nabla\delta_{K}(x)|=1$ almost everywhere,%
\begin{align*}
\widehat{H}_{R}(n)  &  =\int_{\mathbb{R}^{2}}\psi\left(  R|\delta_K(x)| \right)e^{-2\pi ix\cdot n}dx\\
&  =\int_{-\infty}^{\infty}\psi\left(  |Ru|\right)  \int_{K_{u}}e^{-2\pi
ix\cdot n}dxdu.
\end{align*}
where $K_{u}=\{x:\delta_K(x)=u\}$ as in the above Definition \ref{def}, and the integration on the level
set $K_{u}$ is with respect to the Hausdorff measure. Thus%
\begin{align*}
\left\vert \widehat{H}_{R}\left(  n\right)  \right\vert  &  \leq\int%
_{|u|<R^{-1/2}} \psi\left(  R|u|\right)   du\sup
_{\left\vert u\right\vert <R^{-1/2}}\left\vert \int_{K_{u}}e^{-2\pi ix\cdot
n}dx\right\vert \\
& \,\,\, +\int_{|u|\ge R^{-1/2}}\psi\left(  R|u|\right)
 \left\vert K_{u}\right\vert du\\
&  \leq\frac{c_{1}}{R}\sup_{\left\vert u\right\vert <R^{-1/2}}\left\vert
\int_{K_{u}}e^{-2\pi ix\cdot n}dx\right\vert +\frac{c_{2}}{R^{10}}.
\end{align*}
The constant $c_{1}$ is just the integral of $2\psi $ on
$\left[  0,+\infty\right),$ while $c_{2}$ depends on the rapid decay of
$\psi$ and the slow growth of $\left\vert K_{u}\right\vert $ (recall that
$K^{u}$ is convex and contained in a square of side $1+2|u|$, and therefore the Hausdorff measure of $K_u$ is smaller than
$4(1+2|u|)$). 
In particular, $c_1$ and $c_2$ are universal constants and we immediately have that for any  $R\ge 1$
\[
\left|\widehat H_R(0)\right|\le\frac c R,
\]
where  $c$ is a universal constant.

Now assume $n\neq 0,$ $R^{-1/2}\leq  1/(2\kappa_\mathrm{max})$ and $0\leq u\leq R^{-1/2}.$ Then, by the above Lemma
\ref{intersection} and Lemma \ref{smooth}, $K_{u}$ consists of at most
four smooth convex curves with curvature bounded below by $\kappa_\mathrm{min}$, and at
most four segments of length at most $1$ parallel to the axes. By Lemma \ref{fourier_one} and
Lemma \ref{fourier_two} this gives
\[
\sup_{0\leq u\leq R^{-1/2}}\left\vert \int_{K_{u}}e^{-2\pi ix\cdot
n}dx\right\vert \leq c\frac{1}{\left\vert n\right\vert
^{1/2}}+c\sum_{i=1}^{2}\frac{1}{1+\left\vert n_{i}\right\vert },
\]
where the constant $c$ depends only on the curvature $\kappa_\mathrm{min}.$ On the other
hand, if $R^{-1/2}\le 1/(2\kappa_\mathrm{max})  $ and if $-R^{-1/2}\le u<0$,
then $K_{u}$ is composed by at most four smooth convex curves with curvature
greater than or equal to $2\kappa_\mathrm{min}/3$, at most four segments parallel to the
axes and of length at most $1$, and at most eight arcs of circles of radius
$\left\vert u\right\vert $. In order to better understand this, observe (see Figure \ref{outer}) that
one can divide the complement of $K$ into at most sixteen regions by taking the two normals to 
$\partial K$ at each ``vertex'' of $K$ (there are at most eight ``vertices''). The part of
$K_u$ that intersects a region attached to a straight line is a parallel straight line of 
length at most $1$. The part of $K_u$ that intersects a region attached to a 
curve coming from $\partial \Omega$ is a part of $\Omega_u$. Finally, the part of  
$K_u$ that intersects a region attached to a vertex of $K$ is an arc of circle of radius $|u|$.

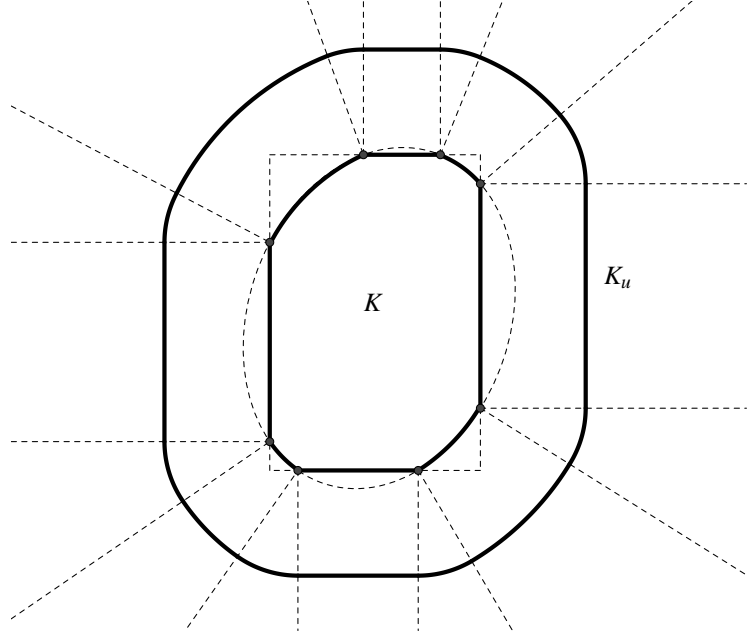
\begin{figure}
\centering
\begin{tikzpicture}[line cap=round,line join=round,>=triangle 45,x=7.0cm,y=7.0cm]
\clip(-0.2807182545241097,-0.09463180825196525) rectangle (1.1161071423012872,1.1006549587010719);
\fill[line width=0.0pt,dash pattern=on 3pt off 3pt,color=ffffff] (0.21043726626080655,0.20969973652946433) -- (0.21043726626080655,0.8096997365294644) -- (0.6104372662608073,0.8096997365294644) -- (0.6104372662608073,0.20969973652946433) -- cycle;
\draw [rotate around={71.36883352118411:(0.41832252974556966,0.49926781285205596)},line width=1.6pt,dash pattern=on 3pt off 3pt,color=ffffff,fill=ffffff,fill opacity=0.05] (0.41832252974556966,0.49926781285205596) ellipse (2.3211621356044287cm and 1.7400443885027044cm);
\draw [dash pattern=on 2pt off 2pt,domain=0.6104372662608073:1.1161071423012872] plot(\x,{(--0.1531003243861696-0.0*\x)/0.20289900496219448});
\draw [dash pattern=on 2pt off 2pt,domain=0.6104372662608073:1.1161071423012872] plot(\x,{(--0.07589559643367233-0.0*\x)/0.2314195233412687});
\draw [dash pattern=on 2pt off 2pt] (0.5344876935926546,0.8096997365294644) -- (0.5344876935926546,1.1006549587010719);
\draw [dash pattern=on 2pt off 2pt] (0.38834155253628333,0.8096997365294644) -- (0.38834155253628333,1.1006549587010719);
\draw [dash pattern=on 2pt off 2pt,domain=-0.2807182545241097:0.2104372662608065] plot(\x,{(-0.09681515941124277-0.0*\x)/-0.1505180222183522});
\draw [dash pattern=on 2pt off 2pt,domain=-0.2807182545241097:0.2104372662608065] plot(\x,{(-0.042945950165757536-0.0*\x)/-0.16240157154296653});
\draw [dash pattern=on 2pt off 2pt] (0.2638970169112267,0.20969973652946436) -- (0.2638970169112267,-0.09463180825196525);
\draw [dash pattern=on 2pt off 2pt] (0.4923562167699218,0.20969973652946436) -- (0.4923562167699218,-0.09463180825196525);
\draw [dash pattern=on 2pt off 2pt,domain=0.6104372662608073:1.1161071423012872] plot(\x,{(--0.035526988420498506--0.13544537159931247*\x)/0.15665716918351513});
\draw [dash pattern=on 2pt off 2pt,domain=0.5344876935926546:1.1161071423012872] plot(\x,{(-0.04059401368597726--0.19550970984756788*\x)/0.07892249104504068});
\draw [dash pattern=on 2pt off 2pt,domain=-0.2807182545241097:0.38834155253628333] plot(\x,{(-0.13414210497204693--0.1966149234640452*\x)/-0.07137011133276344});
\draw [dash pattern=on 2pt off 2pt,domain=-0.2807182545241097:0.2104372662608065] plot(\x,{(-0.1326687693042469--0.09283594785453886*\x)/-0.17588670257438746});
\draw [dash pattern=on 2pt off 2pt,domain=-0.2807182545241097:0.2104372662608065] plot(\x,{(-0.013300517014078993-0.07601553588083435*\x)/-0.11078767241749173});
\draw [dash pattern=on 2pt off 2pt,domain=-0.2807182545241097:0.2638970169112267] plot(\x,{(--0.014861321189561998-0.12761920351015116*\x)/-0.08973309279627881});
\draw [dash pattern=on 2pt off 2pt,domain=0.4923562167699218:1.1161071423012872] plot(\x,{(--0.07804585973471516-0.12675524813365224*\x)/0.07456912233983815});
\draw [dash pattern=on 2pt off 2pt,domain=0.6104372662608073:1.1161071423012872] plot(\x,{(--0.11865073819565669-0.10471108158755088*\x)/0.16688539143746883});
\draw [shift={(0.38810220184976074,0.38634545640533235)},line width=1.6pt]  plot[domain=3.742953962387106:4.0995643719694606,variable=\t]({1.0*0.21546473296783697*cos(\t r)+-0.0*0.21546473296783697*sin(\t r)},{0.0*0.21546473296783697*cos(\t r)+1.0*0.21546473296783697*sin(\t r)});
\draw [shift={(0.459578619043385,0.6241322102622126)},line width=1.6pt]  plot[domain=0.7129077649542921:1.1871253874080219,variable=\t]({1.0*0.19942626240059178*cos(\t r)+-0.0*0.19942626240059178*sin(\t r)},{0.0*0.19942626240059178*cos(\t r)+1.0*0.19942626240059178*sin(\t r)});
\draw [shift={(0.29258488213437456,0.5206986766476933)},line width=1.6pt]  plot[domain=5.283370185507661:5.7380818723313025,variable=\t]({1.0*0.3696335034824759*cos(\t r)+-0.0*0.3696335034824759*sin(\t r)},{0.0*0.3696335034824759*cos(\t r)+1.0*0.3696335034824759*sin(\t r)});
\draw [shift={(0.5468992102069045,0.46562282165221525)},line width=1.6pt]  plot[domain=2.00261196844457:2.655940078595787,variable=\t]({1.0*0.378852813315989*cos(\t r)+-0.0*0.378852813315989*sin(\t r)},{0.0*0.378852813315989*cos(\t r)+1.0*0.378852813315989*sin(\t r)});
\draw [line width=1.6pt] (0.5344876935926546,0.8096997365294644)-- (0.38834155253628333,0.8096997365294644);
\draw [line width=1.6pt] (0.2104372662608065,0.6432130716599225)-- (0.2104372662608065,0.2644429469353709);
\draw [line width=1.6pt] (0.2638970169112267,0.20969973652946436)-- (0.4923562167699218,0.20969973652946436);
\draw [line width=1.6pt] (0.6104372662608073,0.32795675722549544)-- (0.6104372662608073,0.7545641951999532);
\draw (0.37142056710623733,0.5627759135722052) node[anchor=north west] {$\huge K$};
\draw (0.8277025149136089,0.6145716734735035) node[anchor=north west] {$\huge K_u$};
\draw [rotate around={71.36883352118411:(0.41832252974556966,0.49926781285205596)},dash pattern=on 2pt off 2pt] (0.41832252974556966,0.49926781285205596) ellipse (2.3211621356044287cm and 1.7400443885027044cm);
\draw [shift={(0.5344876935926546,0.8096997365294644)},line width=1.6pt]  plot[domain=1.1871253874080208:1.5707963267948966,variable=\t]({1.0*0.20000000000000007*cos(\t r)+-0.0*0.20000000000000007*sin(\t r)},{0.0*0.20000000000000007*cos(\t r)+1.0*0.20000000000000007*sin(\t r)});
\draw [shift={(0.38834155253628333,0.8096997365294644)},line width=1.6pt]  plot[domain=1.5707963267948966:1.919000201977918,variable=\t]({1.0*0.20000000000000007*cos(\t r)+-0.0*0.20000000000000007*sin(\t r)},{0.0*0.20000000000000007*cos(\t r)+1.0*0.20000000000000007*sin(\t r)});
\draw [shift={(0.2104372662608065,0.6432130716599225)},line width=1.6pt]  plot[domain=2.655940078595787:3.141592653589793,variable=\t]({1.0*0.19999999999999993*cos(\t r)+-0.0*0.19999999999999993*sin(\t r)},{0.0*0.19999999999999993*cos(\t r)+1.0*0.19999999999999993*sin(\t r)});
\draw [shift={(0.2104372662608065,0.2644429469353709)},line width=1.6pt]  plot[domain=3.141592653589793:3.742953962387106,variable=\t]({1.0*0.2*cos(\t r)+-0.0*0.2*sin(\t r)},{0.0*0.2*cos(\t r)+1.0*0.2*sin(\t r)});
\draw [shift={(0.2638970169112267,0.20969973652946436)},line width=1.6pt]  plot[domain=4.0995643719694606:4.71238898038469,variable=\t]({1.0*0.20000000000000004*cos(\t r)+-0.0*0.20000000000000004*sin(\t r)},{0.0*0.20000000000000004*cos(\t r)+1.0*0.20000000000000004*sin(\t r)});
\draw [shift={(0.4923562167699218,0.20969973652946436)},line width=1.6pt]  plot[domain=4.71238898038469:5.244155315236654,variable=\t]({1.0*0.2*cos(\t r)+-0.0*0.2*sin(\t r)},{0.0*0.2*cos(\t r)+1.0*0.2*sin(\t r)});
\draw [shift={(0.6104372662608073,0.32795675722549544)},line width=1.6pt]  plot[domain=-0.5603541977995103:0.0,variable=\t]({1.0*0.19999999999999984*cos(\t r)+-0.0*0.19999999999999984*sin(\t r)},{0.0*0.19999999999999984*cos(\t r)+1.0*0.19999999999999984*sin(\t r)});
\draw [shift={(0.6104372662608073,0.7545641951999532)},line width=1.6pt]  plot[domain=0.0:0.7129077649542922,variable=\t]({1.0*0.19999999999999996*cos(\t r)+-0.0*0.19999999999999996*sin(\t r)},{0.0*0.19999999999999996*cos(\t r)+1.0*0.19999999999999996*sin(\t r)});
\draw [line width=1.6pt] (0.5344876935926546,1.0096997365294644)-- (0.38834155253628333,1.0096997365294644);
\draw [line width=1.6pt] (0.010437266260806488,0.6432130716599225)-- (0.010437266260806488,0.2644429469353709);
\draw [line width=1.6pt] (0.2638970169112267,0.009699736529464348)-- (0.4923562167699218,0.009699736529464348);
\draw [line width=1.6pt] (0.8104372662608073,0.32795675722549544)-- (0.8104372662608073,0.7545641951999532);
\draw [shift={(0.5217965660311035,0.48860101045602533)},line width=1.6pt]  plot[domain=1.9480108859673098:2.671652476569,variable=\t]({1.0*0.5475952611530206*cos(\t r)+-0.0*0.5475952611530206*sin(\t r)},{0.0*0.5475952611530206*cos(\t r)+1.0*0.5475952611530206*sin(\t r)});
\draw [shift={(0.45839717642482786,0.6250064016444956)},line width=1.6pt]  plot[domain=0.7093205227253653:1.18356620909317,variable=\t]({1.0*0.39975079966349847*cos(\t r)+-0.0*0.39975079966349847*sin(\t r)},{0.0*0.39975079966349847*cos(\t r)+1.0*0.39975079966349847*sin(\t r)});
\draw [shift={(0.31039574163824657,0.5094562637997624)},line width=1.6pt]  plot[domain=5.252946041399171:5.733229419476964,variable=\t]({1.0*0.5506495112107671*cos(\t r)+-0.0*0.5506495112107671*sin(\t r)},{0.0*0.5506495112107671*cos(\t r)+1.0*0.5506495112107671*sin(\t r)});
\draw [shift={(0.38772616271281013,0.384092862546494)},line width=1.6pt]  plot[domain=3.7389802182074434:4.0971771333595095,variable=\t]({1.0*0.41388349168498745*cos(\t r)+-0.0*0.41388349168498745*sin(\t r)},{0.0*0.41388349168498745*cos(\t r)+1.0*0.41388349168498745*sin(\t r)});
\draw [dash pattern=on 2pt off 2pt] (0.21043726626080655,0.20969973652946433)-- (0.21043726626080655,0.8096997365294644);
\draw [dash pattern=on 2pt off 2pt] (0.21043726626080655,0.8096997365294644)-- (0.6104372662608073,0.8096997365294644);
\draw [dash pattern=on 2pt off 2pt] (0.6104372662608073,0.8096997365294644)-- (0.6104372662608073,0.20969973652946433);
\draw [dash pattern=on 2pt off 2pt] (0.6104372662608073,0.20969973652946433)-- (0.21043726626080655,0.20969973652946433);
\begin{scriptsize}
\draw [fill=uuuuuu] (0.6104372662608073,0.7545641951999532) circle (1.5pt);
\draw [fill=uuuuuu] (0.6104372662608073,0.32795675722549544) circle (1.5pt);
\draw [fill=uuuuuu] (0.38834155253628333,0.8096997365294644) circle (1.5pt);
\draw [fill=uuuuuu] (0.5344876935926546,0.8096997365294644) circle (1.5pt);
\draw [fill=uuuuuu] (0.2104372662608065,0.2644429469353709) circle (1.5pt);
\draw [fill=uuuuuu] (0.2104372662608065,0.6432130716599225) circle (1.5pt);
\draw [fill=uuuuuu] (0.4923562167699218,0.20969973652946436) circle (1.5pt);
\draw [fill=uuuuuu] (0.2638970169112267,0.20969973652946436) circle (1.5pt);
\end{scriptsize}
\end{tikzpicture}
\caption{A convex body $K=\Omega\cap I$ and the relative set $K_u$, with $u<0.$ $\Omega$
has smooth boundary with non vanishing curvature and $I$ is a rectangle.}
\label{outer}
\end{figure}

It follows that%
\begin{align*}
\sup_{-R^{-1/2}\leq u<0}\left\vert \int_{K_{u}}e^{-2\pi ix\cdot n}%
dx\right\vert  &  \leq c\frac{1}{\left\vert n\right\vert
^{1/2}}+c\sum_{i=1}^{2}\frac{1}{1+\left\vert n_{i}\right\vert }+c\frac
{\left\vert u\right\vert^{1/2} }{ \left\vert
n\right\vert  ^{1/2}}\\
&  \leq c\frac{1}{\left\vert n\right\vert ^{1/2}}%
+c\sum_{i=1}^{2}\frac{1}{1+\left\vert n_{i}\right\vert },
\end{align*}
where the constant $c$ depends only on the minimal curvature $\kappa_\mathrm{min}.$
Therefore, when $0<\left\vert n\right\vert <R$ we have%
\begin{align*}
\left\vert \widehat{H}_{R}\left(  n\right)  \right\vert  &  \leq c\frac{1}%
{R}\frac{1}{\left\vert n\right\vert ^{1/2}}+c\sum_{i=1}%
^{2}\frac{1}{R}\frac{1}{1+\left\vert n_{i}\right\vert }+c\frac{1}{R^{10}}\\
&  \leq c\frac{1}{|n|^{3/2}}+c\frac{1}{1+|n_{1}|}\frac{1}{1+|n_{2}|}.
\end{align*}
\end{proof}

\begin{lemma}
\label{chi}
Let $\Omega$ be a convex body in $\mathbb R^2$ with $\mathcal C^2$ boundary with non-vanishing curvature and let $\kappa_\mathrm{min}$  be the minimum  of the curvature of $\partial \Omega$. Let $I$ be a rectangle
contained in a unit square with sides parallel to the axes, and call $K=\Omega\cap I$. Then there exists a constant $c$ depending 
only on $\kappa_\mathrm{min}$ such that for every  $n=(n_1,n_2)\in\mathbb Z^2\setminus\{(0,0)\}$
\[
|\widehat \chi_K(n)|\le c\frac 1 {|n|^{3/2}}+c\frac 1{1+|n_1|}\frac 1{1+|n_2|}.
\]
\end{lemma}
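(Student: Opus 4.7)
The plan is to pass to a boundary integral via the divergence theorem and then handle separately the $\partial\Omega$ arcs and $\partial I$ segments that make up $\partial K$. Using $e^{-2\pi i x\cdot n}=(-2\pi i|n|^2)^{-1}\nabla\cdot(n\,e^{-2\pi i x\cdot n})$, the divergence theorem applied to the Lipschitz domain $K$ gives
\[
\widehat{\chi}_K(n)=\frac{-1}{2\pi i\,|n|^2}\int_{\partial K}e^{-2\pi i x\cdot n}\,(n\cdot\nu(x))\,d\sigma(x).
\]
By convexity of $\Omega$, each of the four lines supporting $\partial I$ meets $\partial\Omega$ in at most two points, so $\partial K=\partial(\Omega\cap I)$ decomposes into at most four $\mathcal{C}^2$ convex arcs $\gamma\subset\partial\Omega$ (with curvature $\ge\kappa_{\min}$) together with at most four axis-parallel segments $s\subset\partial I$ (of length $\le 1$).

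For a horizontal segment $s$ with outer normal $\nu=(0,\pm 1)$, Lemma \ref{fourier_two} gives $\bigl|\int_s e^{-2\pi i x\cdot n}d\sigma\bigr|\le\min(1,(\pi|n_1|)^{-1})$; since $n\cdot\nu=\pm n_2$, its contribution is at most $(2\pi|n|^2)^{-1}|n_2|\cdot\min(1,(\pi|n_1|)^{-1})$. Using $|n_2|/|n|^2\le 2/(1+|n_2|)$ (with a short case analysis on whether $n_1=0$), this is bounded by $c/((1+|n_1|)(1+|n_2|))$. Vertical segments are handled identically with $n_1\leftrightarrow n_2$. Summing the at most four segment contributions yields the second term in the claim.

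The main work is to show
\[
\left|\int_\gamma e^{-2\pi i r(\tau)\cdot n}\,(n\cdot\nu(\tau))\,d\tau\right|\le c\,|n|^{1/2}
\]
for each arc, with $c$ depending only on $\kappa_{\min}$; dividing by $2\pi|n|^2$ then produces the $|n|^{-3/2}$ term. I split $\gamma$ (parametrized by arclength as $r(\tau)$) exactly as in Lemma \ref{fourier_one}: at most three intervals with $|r'\cdot\eta|>2^{-1/2}$ (Case A), and at most three with $|r'\cdot\eta|\le 2^{-1/2}$ (Case B), where $\eta=n/|n|$. Setting $\phi(\tau)=-2\pi r(\tau)\cdot n$, in Case A one has $|\phi'(\tau)|=2\pi|n\cdot r'|\gtrsim|n|$ and $\phi''=2\pi\kappa(n\cdot\nu)$ changes sign at most once, so van der Corput's first-derivative lemma with amplitude applies on each of its (at most two) monotonic subpieces. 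In Case B, $|\nu\cdot\eta|\ge 2^{-1/2}$ forces $|\phi''|\ge\sqrt{2}\,\pi\kappa_{\min}|n|$ with constant sign, so the second-derivative version applies.

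The main obstacle is that the amplitude $a(\tau)=n\cdot\nu(\tau)$ is itself of size $|n|$, so naive van der Corput would lose a full power of $|n|$, and controlling its total variation appears to require $\kappa_{\max}$. The key observation is that the Frenet identity $\nu'=\kappa r'$ gives
\[
\int_\gamma|a'(\tau)|\,d\tau=\int_\gamma\kappa(\tau)|n\cdot r'(\tau)|\,d\tau\le|n|\int_\gamma\kappa(\tau)\,d\tau\le 2\pi|n|,
\]
since $\int_\gamma\kappa\,d\tau$ is the total turning of the tangent along an arc of a convex curve, which cannot exceed $2\pi$. Thus $\|a\|_\infty+\int|a'|\le c|n|$ with $c$ universal, so the amplitude form of van der Corput yields $O(1)$ in Case A and $O(|n|^{1/2}/\sqrt{\kappa_{\min}})$ in Case B. Summing over the at most four arcs and combining with the segment bound completes the proof.
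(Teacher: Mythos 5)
Your argument is correct and is essentially the same proof as in the paper: both start from the divergence theorem, decompose $\partial K$ into at most four $\partial\Omega$-arcs and at most four axis-parallel segments, handle segments via Lemma \ref{fourier_two}, and handle arcs by exploiting that the total turning $\int_\gamma\kappa\,d\tau\le 2\pi$ controls the variation of the amplitude $n\cdot\nu$. The only difference is presentational — you invoke van der Corput with amplitude on each of the Case A/B subintervals, whereas the paper does a single explicit integration by parts on the whole arc and then quotes Lemma \ref{fourier_one} for the resulting partial integrals $\int_0^\tau e^{-2\pi i n\cdot r(u)}\,du$ — but these are the same manipulation in different bookkeeping.
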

\begin{proof}
An application of the divergence theorem 
gives
\begin{eqnarray*}
\left\vert \widehat{\chi}_{K}(n)\right\vert 
&=& 
\left|\int_Ke^{-2\pi in\cdot x}dx\right|
= \left|\int_{\partial K}\frac{\nu(x)\cdot n}{2\pi i\left|n\right|^2}e^{-2\pi in\cdot x}\,dx   \right|\\
&=& \frac{1}{2\pi \left|n\right|}\left|\int_{\partial K}\nu(x)\cdot \frac{n}{\left|n\right|}e^{-2\pi in\cdot x}\,dx   \right|.
\end{eqnarray*}
Here $\nu(x)$ is the outer normal to $\partial K$ at the point $x$. 
This oscillatory integral can be estimated by means of standard techniques.
We include the details for the sake of completeness.
The boundary of $K=\Omega\cap I$ is composed by at most four smooth convex curves with
curvature bounded below by $\kappa_\mathrm{min}$, coming from $\partial\Omega$, and at most
four segments of length at most $1$ parallel to the axes, coming from
$\partial I$.
We therefore split the above integral into a sum of integrals over the 
components of $\partial K$ described above. 
When integrating over a segment, the quantity $\nu(x)\cdot n/|n|$ remains constant and
an immediate application of Lemma \ref{fourier_two} gives the estimate
\[
c\frac{1}{1+|n_{1}|}\frac{1}{1+|n_{2}|},
\]
with $c$ a universal constant. Let us now estimate the integral over an arc of $\partial \Omega,$ 
call it $\gamma.$ If $r(\tau )$ is a parametrization of $\gamma$ with respect to arclength, integration by parts gives
\begin{eqnarray*}
&&\left|\int_{\gamma}\nu(x)\cdot \frac{n}{\left|n\right|}e^{-2\pi in\cdot x}\,dx   \right|
=\left|\int_{0}^\ell \nu(r(\tau ))\cdot\frac n {|n|}e^{-2\pi i n\cdot r(\tau )}d\tau \right|\\
&=&\left| \nu(r(\ell))\cdot\frac n {|n|}\int_{0}^\ell e^{-2\pi i n\cdot r(u)}du  -
\int_0^\ell \frac d{d\tau}\left(\nu(r(\tau))\right)\cdot\frac n {|n|}\int_{0}^\tau e^{-2\pi i n\cdot r(u)}dud\tau \right|\\
&\leq&
\left| \int_{0}^\ell e^{-2\pi i n\cdot r(u)}du\right|  +
\int_0^\ell \kappa(\tau) d\tau\sup_{0\leq \tau\leq\ell}\left|\int_{0}^\tau e^{-2\pi i n\cdot r(u)}du \right|\\
&\leq&
\left| \int_{0}^\ell e^{-2\pi i n\cdot r(u)}du\right|  +
2\pi\sup_{0\leq \tau\leq\ell}\left|\int_{0}^\tau e^{-2\pi i n\cdot r(u)}du \right|
\leq \frac c{|n|^{1/2}}.
\end{eqnarray*}
Here $\kappa(\tau)$ is the curvature of $\gamma$ at the point $r(\tau)$ and 
$\int_0^\ell\kappa(\tau)d\tau$ is the total curvature of $\gamma.$ Since $\gamma$
is an arc of $\partial \Omega,$ the total curvature of $\gamma$ is smaller than 
the total curvature of $\partial \Omega,$ that is $2\pi.$
The last inequality is just an immediate application of Lemma \ref{fourier_one}, where
the constant $c$ above depends only on the minimal curvature $\kappa_\mathrm{min}$ of
$\partial\Omega.$ 

\end{proof}

We are now ready to proceed with the proof of the main result of the paper.

\begin{proof}[Proof of Theorem \ref{HK}]
Let $\kappa_\mathrm{min}$ and $\kappa_\mathrm{max}$ be the minimum and the maximum of the curvature of $\partial\Omega.$
If we call $m_{1},\ldots,m_{q}$ the lattice points for
which the sets
\[
\left(  \left[  0,s_{1}\right]  \times\left[  0,s_{2}\right]
+x+m_{i}\right)  \cap\Omega
\]
are nonempty, and let
\[
K_{i}=\left(  \left[  0,s_{1}\right]  \times\left[  0,s_{2}\right]
+x+m_{i}\right)  \cap\Omega,
\]
then of course%
\[
\cup_{m\in\mathbb{Z}^{2}}\left(  \left(  \left[  0,s_{1}\right]  \times\left[
0,s_{2}\right]  +x+m\right)  \cap\Omega\right)  =\cup_{i=1}^{q}K_{i}.%
\]
The number $q$ is bounded by the maximum number of unit squares with integer
vertices that intersect any given translate of $\Omega$ in $\mathbb{R}^{2}$. This
number is of course bounded by $\left(  \text{diam}\left(  \Omega\right)
+2\right)  ^{2}.$ We recall that we need a uniform estimate with respect to
$s$ and $x$. 

\noindent%

\begin{figure}[h]
\centering
\begin{tikzpicture}[line cap=round,line join=round,>=triangle 45,x=3.5cm,y=3.5cm]
\draw [color=cqcqcq,dash pattern=on 2pt off 2pt, xstep=3.5cm,ystep=3.5cm] (-0.3532345349542238,-0.29891451526399226) grid (2.468338239833443,2.2180566531545574);
\draw[->,color=black] (-0.3532345349542238,0.0) -- (2.468338239833443,0.0);
\foreach \x in {,1,2}
\draw[shift={(\x,0)},color=black] (0pt,2pt) -- (0pt,-2pt) node[below] {\footnotesize $\x$};
\draw[->,color=black] (0.0,-0.29891451526399226) -- (0.0,2.2180566531545574);
\foreach \y in {,1,2}
\draw[shift={(0,\y)},color=black] (2pt,0pt) -- (-2pt,0pt) node[left] {\footnotesize $\y$};
\draw[color=black] (0pt,-10pt) node[right] {\footnotesize $0$};
\clip(-0.3532345349542238,-0.29891451526399226) rectangle (2.468338239833443,2.2180566531545574);
\fill[color=zzttqq,fill=zzttqq,fill opacity=0.1] (0.4627777777777806,0.25666666666666693) -- (0.4627777777777806,0.7366666666666674) -- (0.7827777777777807,0.7366666666666674) -- (0.7827777777777807,0.25666666666666693) -- cycle;
\fill[color=zzttqq,fill=zzttqq,fill opacity=0.1] (1.4627777777777806,0.25666666666666693) -- (1.4627777777777806,0.7366666666666674) -- (1.7827777777777807,0.7366666666666674) -- (1.7827777777777807,0.25666666666666693) -- cycle;
\fill[color=zzttqq,fill=zzttqq,fill opacity=0.1] (0.4627777777777806,1.2566666666666668) -- (0.7827777777777807,1.2566666666666668) -- (0.7827777777777807,1.7366666666666672) -- (0.4627777777777806,1.7366666666666672) -- cycle;
\fill[color=zzttqq,fill=zzttqq,fill opacity=0.1] (1.7827777777777807,1.2566666666666668) -- (1.4627777777777806,1.2566666666666668) -- (1.4627777777777806,1.7366666666666672) -- (1.7827777777777807,1.7366666666666672) -- cycle;
\draw [color=zzttqq] (0.4627777777777806,0.25666666666666693)-- (0.4627777777777806,0.7366666666666674);
\draw [color=zzttqq] (0.4627777777777806,0.7366666666666674)-- (0.7827777777777807,0.7366666666666674);
\draw [color=zzttqq] (0.7827777777777807,0.7366666666666674)-- (0.7827777777777807,0.25666666666666693);
\draw [color=zzttqq] (0.7827777777777807,0.25666666666666693)-- (0.4627777777777806,0.25666666666666693);
\draw [color=zzttqq] (1.4627777777777806,0.25666666666666693)-- (1.4627777777777806,0.7366666666666674);
\draw [color=zzttqq] (1.4627777777777806,0.7366666666666674)-- (1.7827777777777807,0.7366666666666674);
\draw [color=zzttqq] (1.7827777777777807,0.7366666666666674)-- (1.7827777777777807,0.25666666666666693);
\draw [color=zzttqq] (1.7827777777777807,0.25666666666666693)-- (1.4627777777777806,0.25666666666666693);
\draw [color=zzttqq] (0.4627777777777806,1.2566666666666668)-- (0.7827777777777807,1.2566666666666668);
\draw [color=zzttqq] (0.7827777777777807,1.2566666666666668)-- (0.7827777777777807,1.7366666666666672);
\draw [color=zzttqq] (0.7827777777777807,1.7366666666666672)-- (0.4627777777777806,1.7366666666666672);
\draw [color=zzttqq] (0.4627777777777806,1.7366666666666672)-- (0.4627777777777806,1.2566666666666668);
\draw [color=zzttqq] (1.7827777777777807,1.2566666666666668)-- (1.4627777777777806,1.2566666666666668);
\draw [color=zzttqq] (1.4627777777777806,1.2566666666666668)-- (1.4627777777777806,1.7366666666666672);
\draw [color=zzttqq] (1.4627777777777806,1.7366666666666672)-- (1.7827777777777807,1.7366666666666672);
\draw [color=zzttqq] (1.7827777777777807,1.7366666666666672)-- (1.7827777777777807,1.2566666666666668);
\draw [rotate around={82.5441233851533:(1.198760039974897,1.0673394735378672)},fill=black,fill opacity=0.1] (1.198760039974897,1.0673394735378672) ellipse (3.0955553401224063cm and 2.5760334219450463cm);
\draw (1.175117384722429,1.9081111589543962) node[anchor=north west] {$\Omega$};
\draw (0.5498825084910711,0.7271119482951617) node[anchor=north west] {$K_1$};
\draw (1.4904067667536267,0.7271119482951617) node[anchor=north west] {$K_2$};
\draw (1.4850628789225895,1.5500706742748998) node[anchor=north west] {$K_3$};
\draw (0.5445386206600339,1.5393828986128253) node[anchor=north west] {$K_4$};
\end{tikzpicture}
\caption{The intersection of a convex set $\Omega$ with smooth boundary having non-vanishing curvature with the integer translates of a fixed rectangle.}
\label{two}
\end{figure}
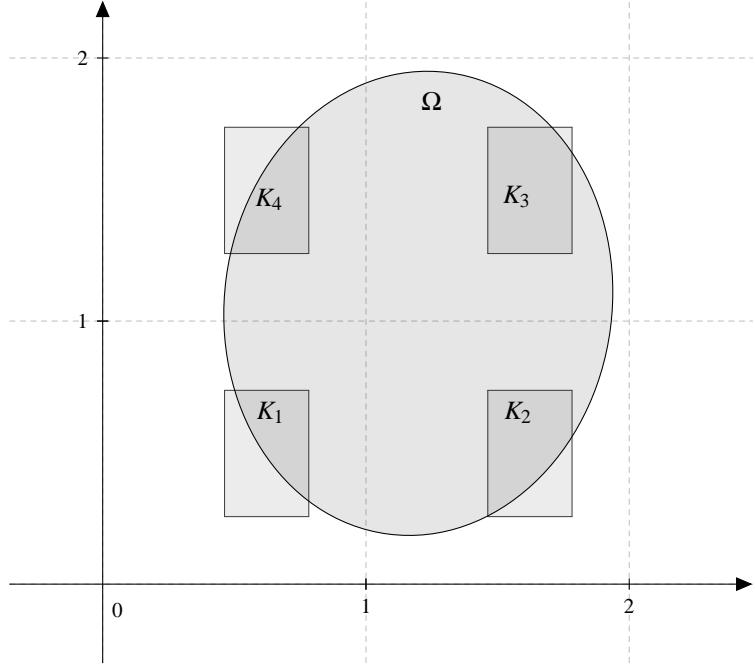

The sets $K_{i}$ are as in Figure \ref{two}; at most four sides are
parallel to the coordinate axes, while the curved parts come from
$\partial\Omega$. The discrepancy
\[
\left\vert \frac{1}{N}\sum_{j=1}^{N}\sum_{m\in\mathbb{Z}^{2}}\chi_{I\left(
s,x\right)  \cap\Omega}\left(  t\left(  j\right)  +m\right)  -\left\vert
  I\left(  s,x\right)   \cap\Omega\right\vert \right\vert
\]
is clearly bounded by the sum of the discrepancies of the sets $K_{i},$%
\[
\sum_{i=1}^{q}\left\vert \frac{1}{N}\sum_{j=1}^{N}\sum_{m\in\mathbb{Z}^{2}%
}\chi_{K_{i}}\left(  t\left(  j\right)  +m\right)  -\left\vert K_{i}%
\right\vert \right\vert ,
\]
and we shall therefore study the discrepancy of a single piece $K_{i}$. Let us call $K$ one such set.

By the general form of the Erd\H{o}s-Tur\'{a}n inequality in Theorem \ref{ET}, the discrepancy 
of a single piece $K$ is bounded by the
quantity
\begin{equation}
\left\vert \widehat{H}_{R}(0)\right\vert +\sum_{n\in\mathbb{Z}^{2}%
,\,0<|n|<R}\left(  \left\vert \widehat{\chi}_{K}(n)\right\vert +\left\vert
\widehat{H}_{R}(n)\right\vert \right)  \left\vert \frac{1}{N}\sum_{j=1}%
^{N}e^{2\pi in\cdot t(j)}\right\vert. \label{goal}%
\end{equation}
We recall that $R>0$ is a number that we can choose at our convenience, $H_{R}%
(x)=\psi(R|\delta_K(x)|)$ and $\psi(u)$ is a properly chosen function
on $[0,+\infty)$ with rapid decay at infinity.

The estimates of $\widehat \chi_K(n)$ and $\widehat H_R(n)$ are contained in the above Lemmas \ref{acca} and \ref{chi}, while
the estimate of the exponential sums follows a standard argument,%
\begin{align*}
&\left\vert \frac{1}{N}\sum_{j=1}^{N}e^{2\pi in\cdot\left(  j\alpha,j\beta\right)  }\right\vert =\left\vert
\frac{1}{N}\sum_{j=1}^{N}e^{2\pi ijn\cdot\left(  \alpha,\beta\right)
}\right\vert\\
 =&\left\vert \frac{1}{N}\frac{\sin\left(  \pi Nn\cdot\left(
\alpha,\beta\right)  \right)  }{\sin\left(  \pi n\cdot\left(  \alpha
,\beta\right)  \right)  }\right\vert \leq\frac{1}{N\left\Vert n\cdot\left(
\alpha,\beta\right)  \right\Vert },
\end{align*}
where $\left\Vert u\right\Vert $ is the distance 
{from $u$ to the closest integer}.

Overall, the goal estimate (\ref{goal}) becomes
\[
  \frac{1}{R}+\sum_{0<\left\vert n\right\vert <R}\left(  \frac{1}{|n|^{3/2}%
}+\frac{1}{1+|n_{1}|}\frac{1}{1+|n_{2}|}\right)  \frac
{1}{N\left\Vert n\cdot\left(  \alpha,\beta\right)  \right\Vert }.
\]
Observe now that
\begin{align*}
&\sum_{0<\left\vert n\right\vert <R}\frac{1}{ 1+\left\vert
n_{1}\right\vert  }\frac{1}{1+\left\vert n_{2}\right\vert
 }\frac{1}{\left\Vert n\cdot\left(  \alpha,\beta\right)  \right\Vert
}\\
 & \leq c\sum_{i=0}^{\log R}\sum_{j=0}^{\log R}\frac{1}{2^{i}}\frac{1}{2^{j}%
}\sum_{n_{1}=2^{i}}^{2^{i+1}-1}\sum_{n_{2}=2^{j}}^{2^{j+1}-1}\frac
{1}{\left\Vert n_{1}\alpha+n_{2}\beta\right\Vert }\\
& +c\sum_{i=0}^{\log R}%
\frac{1}{2^{i}}\sum_{n_{1}=2^{i}}^{2^{i+1}-1}\frac{1}{\left\Vert n_{1}%
\alpha\right\Vert }
 +c\sum_{j=0}^{\log R}\frac{1}{2^{j}}\sum_{n_{2}=2^{j}}^{2^{j+1}-1}\frac
{1}{\left\Vert n_{2}\beta\right\Vert }.
\end{align*}
Let us study the sum $\sum_{n_{1}=2^{i}%
}^{2^{i+1}-1}\sum_{n_{2}=2^{j}}^{2^{j+1}-1}\left\Vert n_{1}\alpha+n_{2}\beta\right\Vert ^{-1}$ first. By the
celebrated result of W. M. Schmidt \cite{SCH}, see also \cite[Theorem 7C]{SCH2}, since $1,\alpha,\beta$ are
linearly independent on $\mathbb{Q}$, for any $\varepsilon>0$ there is a
constant $\gamma>0$ such that for any $n\neq0$,
\begin{eqnarray}
\label{schmidt}
\left\Vert n_{1}\alpha+n_{2}\beta\right\Vert >\frac{\gamma}{\left(
1+\left\vert n_{1}\right\vert \right)  ^{1+\varepsilon}\left(  1+\left\vert
n_{2}\right\vert \right)  ^{1+\varepsilon}}.%
\end{eqnarray}
Then, following \cite{Davenport}, in any interval of the form
\[
\left[  \frac{\left(  k-1\right)  \gamma}{(1+2^{i+1})^{1+\varepsilon
}(1+2^{j+1})^{1+\varepsilon}},\frac{k\gamma}{(1+2^{i+1})^{1+\varepsilon
}(1+2^{j+1})^{1+\varepsilon}}\right)  ,
\]
where $k$ is a positive integer, there are at most two numbers of the form
$\left\Vert n_{1}\alpha+n_{2}\beta\right\Vert $, with $2^{i}\leq n_{1}%
<2^{i+1}$ and $2^{j}\leq n_{2}<2^{j+1}$. Indeed, assume by contradiction that
there are three such numbers. Then for two of them, say $\left\Vert
n_{1}\alpha+n_{2}\beta\right\Vert $ and $\left\Vert m_{1}\alpha+m_{2}%
\beta\right\Vert ,$ the fractional parts of $n_{1}\alpha+n_{2}\beta$ and
$m_{1}\alpha+m_{2}\beta$ belong either to $\left(  0,1/2\right]  $ or to
$\left(  1/2,1\right).$ Assume without loss of generality that they belong
to $\left(  0,1/2\right]  $. Then%
\begin{align*}
\frac{\gamma}{(1+2^{i+1})^{1+\varepsilon}(1+2^{j+1})^{1+\varepsilon}}  &
>\left\vert \left\Vert n_{1}\alpha+n_{2}\beta\right\Vert -\left\Vert
m_{1}\alpha+m_{2}\beta\right\Vert \right\vert \\
&  =\left\vert n_{1}\alpha+n_{2}\beta-p-\left(  m_{1}\alpha+m_{2}%
\beta-q\right)  \right\vert \\
&  \geq\left\Vert \left(  n_{1}-m_{1}\right)  \alpha+\left(  n_{2}%
-m_{2}\right)  \beta\right\Vert \\
& >\frac{\gamma}{(1+2^{i+1})^{1+\varepsilon
}(1+2^{j+1})^{1+\varepsilon}}.
\end{align*}
By the same type of argument, in the first interval $\left[
0,\frac{\gamma}{(1+2^{i+1})^{1+\varepsilon}(1+2^{j+1})^{1+\varepsilon}%
}\right),$ there are no points of the form $\left\Vert n_{1}\alpha
+n_{2}\beta\right\Vert $. It follows that%
\[
\sum_{n_{1}=2^{i}}^{2^{i+1}-1}\sum_{n_{2}=2^{j}}^{2^{j+1}-1}\frac
{1}{\left\Vert n_{1}\alpha+n_{2}\beta\right\Vert }\leq c\sum_{k=1}^{2^{i+j}%
}\frac{2^{\left(  i+j\right)  \left(  1+\varepsilon\right)  }}{k\gamma}\leq
c2^{\left(  i+j\right)  \left(  1+\varepsilon\right)  }\left(  i+j\right)  .
\]
Similarly,
\[
\sum_{n_{1}=2^{i}}^{2^{i+1}-1}\frac{1}{\left\Vert n_{1}\alpha\right\Vert }\leq
c2^{i\left(  1+\varepsilon\right)  }i,\qquad\sum_{n_{2}=2^{j}}^{2^{j+1}%
-1}\frac{1}{\left\Vert n_{2}\beta\right\Vert }\leq c2^{j\left(  1+\varepsilon
\right)  }j.
\]
Finally,%
\begin{align*}
&  \sum_{0<\left\vert n\right\vert <R}\frac{1}{\left(  1+\left\vert
n_{1}\right\vert \right)  }\frac{1}{\left(  1+\left\vert n_{2}\right\vert
\right)  }\frac{1}{\left\Vert n\cdot\left(  \alpha,\beta\right)  \right\Vert
}\\
&  \leq c\sum_{i=0}^{\log R}\sum_{j=0}^{\log R}\frac{1}{2^{i}}\frac{1}{2^{j}%
}2^{\left(  i+j\right)  \left(  1+\varepsilon\right)  }\left(  i+j\right)
+c\sum_{i=0}^{\log R}\frac{1}{2^{i}}2^{i\left(  1+\varepsilon\right)  }%
i+c\sum_{j=0}^{\log R}\frac{1}{2^{j}}2^{j\left(  1+\varepsilon\right)  }j\\
&  \leq c\sum_{i=0}^{\log R}2^{i\varepsilon}R^{\varepsilon}\log
R+cR^{\varepsilon}\log R\leq cR^{2\varepsilon}\log R.
\end{align*}
Finally, we use the hypothesis that  $1,\alpha,\beta$ are a basis of a number field 
in $\mathbb Q.$ By a simple argument in number field theory, there is a
constant $\eta$ such that for any $n\neq0$,
\[
\left\Vert n_{1}\alpha+n_{2}\beta\right\Vert >\frac{\eta}{\left(  \max\left(
\left\vert n_{1}\right\vert ,\left\vert n_{2}\right\vert \right)  \right)
^{2}}.
\]
See 
for example \cite[Theorem 6F]{SCH2}.
By a similar argument as before, this implies that
\[
\sum_{\max\left(  \left\vert n_{1}\right\vert ,\left\vert n_{2}\right\vert
\right)  =2^{i}}^{2^{i+1}-1}\frac{1}{\left\Vert n\cdot\left(  \alpha
,\beta\right)  \right\Vert }\leq c\sum_{k=1}^{2^{2i}}\frac{2^{2i}}{k}\leq
ci2^{2i}.
\]
Thus,
\begin{align*}
  \sum_{0<\left\vert n\right\vert <R}\frac{1}{\left\vert n\right\vert ^{3/2}%
}\frac{1}{\left\Vert n\cdot\left(  \alpha,\beta\right)  \right\Vert }
 & \leq c\sum_{i=0}^{\log R}\sum_{\max\left(  \left\vert n_{1}\right\vert
,\left\vert n_{2}\right\vert \right)  =2^{i}}^{2^{i+1}-1}\frac{1}{\left\vert
n\right\vert ^{3/2}}\frac{1}{\left\Vert n\cdot\left(  \alpha,\beta\right)
\right\Vert }\\
&  \leq c\sum_{i=0}^{\log R}\frac{1}{2^{3i/2}}i2^{2i}\leq cR^{1/2}\log R.
\end{align*}

Setting $R=N^{2/3}$ gives the desired estimate $N^{-2/3}\log N$, 
{as long as $N\ge 8\kappa_\mathrm{max}^3$}.
\end{proof}



\end{document}